\newtheorem{theorem} {{\textsf{Theorem}}}
\newtheorem{proposition}[theorem]{{\textsf{Proposition}}}
\newtheorem{corollary}[theorem]{{\textsf{Corollary}}}
\newtheorem{definition}[theorem]{{\textsf{Definition}}}
\newtheorem{lemma}[theorem]{{\textsf{Lemma}}}
\def\B{{\mathcal{B}}}
\def\S{{\mathbb{S}^{n}}}
\def\D{\Delta}
\def\G{\Gamma}
\begin{document}
\title{Minimal simplicial degree $d$ self-maps of $\mathbb{S}^{n-1}\times \mathbb{S}^1$}

\author{Anshu Agarwal$^1$, Biplab Basak$^{1,2}$, and Sourav Sarkar$^3$}
\date{7 July, 2026}
\maketitle
\vspace{-10mm}

\begin{center}
\noindent {\small Department of Mathematics, IIT Delhi, Hauz Khas, New Delhi 110016$^1$.}

\noindent {\small Institute of Mathematical Sciences, Pusan National University, South Korea$^{3}$.}

\noindent {\small {\em E-mail addresses:} \url{maz228084@maths.iitd.ac.in} (A. Agarwal), \url{biplab@iitd.ac.in} (B. Basak), \url{sarkarsourav610@gmail.com} (S. Sarkar).}

\footnotetext[2]{Corresponding author}

\end{center}

\hrule
	
\begin{abstract}
The degree of a map between orientable manifolds is a fundamental concept in topology, providing important information about the structure of manifolds and the behavior of maps between them. A simplicial cell complex $K$ is called a \emph{colored triangulation} of a closed PL $n$-manifold $M$ if the $1$-skeleton of $K$ admits a proper vertex-coloring with $n+1$ colors and $|K|$ is PL-homeomorphic to $M$.

In this article, we construct, for every $d \in \mathbb{Z}$ and $n \geq 2$, a degree $d$ simplicial map from a $(2(n+1)\max\{|d|,1\})$-facet colored triangulation of $\mathbb{S}^{n-1} \times \mathbb{S}^1$ to the standard $2(n+1)$-facet colored triangulation of $\mathbb{S}^{n-1} \times \mathbb{S}^1$. Additionally, for every $d \in \mathbb{Z}$ and $n \geq 2$, we construct a degree $d$ simplicial map from a $(2\max\{|d|,1\})$-facet colored triangulation of $\mathbb{S}^n$ to the standard $2$-facet colored triangulation of $\mathbb{S}^n$.

For $M = \mathbb{S}^{n-1} \times \mathbb{S}^1$ and $\mathbb{S}^n$, with $n \geq 2$, these simplicial degree $d$ self-maps of $M$ are minimal with respect to their standard colored triangulations, in the sense that there does not exist a colored triangulation $\mathcal{K}$ of $M$ with fewer facets than the constructed one that admits a simplicial map $f : \mathcal{K} \to \mathcal{K}_M$ of degree $d$, where $\mathcal{K}_M$ denotes the standard colored triangulation of $M$.
\end{abstract}

\noindent {\small {\em MSC 2020\,:} Primary 57Q15; Secondary  05C15, 55M25.
		
\noindent {\em Keywords:} Colored triangulation, Simplicial map, Degree, Graph encoded manifold, Regular genus.}
	
\medskip

\section{Introduction}
 The notion of the degree of a map between orientable manifolds, which goes back to the study of Brouwer \cite{Brower1911}, is a powerful invariant that bridges various areas of topology, geometry, and mathematical physics. It provides deep insights into the structure and behavior of manifolds and the maps between them. For orientable manifolds, the degree indicates whether the map preserves or reverses orientation. A positive degree signifies orientation preservation, while a negative degree signifies orientation reversal. In mathematical physics, the degree can represent quantities such as the winding number, topological charge, or the flux of a field through a manifold. These quantities often have physical interpretations, such as in the study of solitons and instantons.
 
Extensive research has been conducted on the degree of maps between two orientable topological spaces, with most studies focusing on the smooth structure of these spaces. In \cite{o53}, the concept of degree maps is used to address the homotopy classification problem for maps of a manifold $M^n$ into a manifold $Q^n$ where $\pi_r(Q^n) = 0$ for $1 < r < n$. Epstein \cite{Epstein} considered a notion of absolute degree from a perspective of the number of inverse images of individual points and their small neighborhoods under a proper map compatible with boundaries. A remarkable work on simplicial maps from an orientable $n$-pseudomanifold into $\mathbb{S}^m$ with the octahedral triangulation can be found in \cite{f67}. Several technical results regarding degree $d$ simplicial maps in dimensions two and three are discussed in \cite{B79}. It follows from the works \cite{Gromov1982, R24} that there does not exist a degree $d$ self-map of an orientable surface of genus greater than one for $d > 1$. For $n \geq 3$, explicit constructions of simplicial self-maps of degree $d$ for $n$-dimensional manifolds other than spheres are not known for all $d$ in the literature.

In this article, we focus on the degree of maps in the PL (piecewise-linear) category. Specifically, considering the simplicial cell complex structures, we study the concept of the degree of maps between two closed orientable PL $n$-manifolds. Our aim is to construct minimal simplicial degree $d$ self-maps of $\mathbb{S}^n$ and $\mathbb{S}^{n-1}\times \mathbb{S}^1$ for all integers $d$. A central motivation for studying minimal simplicial self-maps of $\mathbb{S}^{n-1}\times \mathbb{S}^1$ arises from the broader challenge of understanding the topological and combinatorial constraints on maps between manifolds in the PL category. 
While simplicial maps of prescribed degree between spheres have been extensively studied, comparatively little is known about such maps for manifolds like $\mathbb{S}^{n-1}\times \mathbb{S}^1$, which exhibit richer topological structures and additional subtleties. Our goal is to investigate whether such maps can be constructed explicitly and, if so, to determine minimal simplicial degree $d$ self-maps of $\mathbb{S}^{n-1}\times \mathbb{S}^1$ for all $d$ under simplicial constraints.
This work builds on classical results and aims to extend the theory of simplicial degree maps beyond the well-understood case of spheres, thereby shedding light on fundamental questions in PL topology and providing constructive methods for realizing degree $d$ maps in higher dimensions.

 The outline of this article is as follows: Before moving on to the main result (cf. Theorem \ref{theorem:main2}), involving self-maps of $\mathbb{S}^{n-1}\times \mathbb{S}^1$, of this article, we prove an easy result concerning the construction of a minimal simplicial degree $d$ map from a closed orientable PL $ n $-manifold to $ \mathbb{S}^n $, where $ n \geq 1 $  (cf. Theorem \ref{theorem:main1}). Specifically, this involves creating a degree $ d $ simplicial map from a $ (2\max\{|d|,1\}) $-facet colored triangulation of $ \mathbb{S}^n $ to the standard $ 2 $-facet colored triangulation of $ \mathbb{S}^n $. These triangulations are shown to be the minimal possible for a degree $ d $ simplicial self-map of $ \mathbb{S}^n $  (cf. Corollaries \ref{corollary:main} and \ref{corollary:main2}).

Next, we construct a $ (2(n+1)\max\{|d|,1\} )$-facet colored triangulation of $ \mathbb{S}^{n-1} \times I $. By identifying boundary components of this colored triangulation of $ \mathbb{S}^{n-1} \times I $ appropriately, we then construct a  $ (2(n+1)\max\{|d|,1\} )$-facet  colored triangulation of $ \mathbb{S}^{n-1} \times \mathbb{S}^1 $. On the other hand, we consider the standard $ 2(n+1) $-facet colored triangulation of $ \mathbb{S}^{n-1} \times \mathbb{S}^1 $ (cf. \cite{b19, gv87}). The standard colored triangulation of \( \mathbb{S}^{n-1} \times \mathbb{S}^1 \) was constructed in \cite{gv87}. To the best of our knowledge, there is no colored triangulation of \( \mathbb{S}^{n-1} \times \mathbb{S}^1 \) with fewer facets. For each $ d \in \mathbb{Z} $, we proceed to construct a degree $ d $ simplicial map from a $ (2(n+1)\max\{|d|,1\} )$-facet  colored triangulation of $ \mathbb{S}^{n-1} \times \mathbb{S}^1 $ to the standard $ 2(n+1)$-facet colored triangulation of $ \mathbb{S}^{n-1} \times \mathbb{S}^1 $. We prove that this $(2(n+1)\max\{|d|,1\} )$-facet  colored triangulation of $ \mathbb{S}^{n-1} \times \mathbb{S}^1 $ is the minimal possible colored triangulations for a degree $ d $ simplicial self-map of $ \mathbb{S}^{n-1} \times \mathbb{S}^1 $ with respect to the standard $2(n+1)$-facet colored triangulation of $ \mathbb{S}^{n-1} \times \mathbb{S}^1 $, where $ n \geq 2 $ (cf. Theorem \ref{theorem:main2} and Corollary \ref{corollary:main2}).

\section{Preliminaries}

All spaces and maps in this article are considered in the PL-category \cite{rs72}. Suppose that $K$ is a finite collection of closed balls, and let its geometric realization be  $|K| = \bigcup_{B\in K} B $. Then $K$ is called a \emph{simplicial cell complex} if the following conditions hold:

\begin{enumerate}[$(i)$]
\item $|K|=$ $\bigsqcup_{B\in K}$ int$(B)$, that is, $|K|$ is the disjoint union of the interiors of the balls in $K$;

\item for any $A,B \in K$, the intersection $A\cap B$ is a union of balls belonging to $K$;

\item  for each $h$-ball $A\in K$, the poset $\{B\in K \, | \, B \subset A\}$, ordered by inclusion, is isomorphic to the lattice of all faces of the standard $h$-simplex.
\end{enumerate}
\noindent A {\it pseudo-triangulation} of a polyhedron $P$ is a pair $(K,f)$, where $K$ is a simplicial cell complex and $f: |K| \to P$ is a PL-homeomorphism (see \cite{fgg86} for more details). A maximal closed ball of $K$ is called a \emph{facet}.  If all facets of $K$ have the same dimension $n$, then $K$ is called a 
\emph{pure $n$-dimensional simplicial cell complex}.

\begin{definition}\label{def:colored triangulation}
{\rm 
A pure $n$-dimensional simplicial cell complex $K$ is called \emph{$(n+1)$-colorable} if the vertices of its $1$-skeleton can be properly colored using $n+1$ colors. Let $M$ be a closed connected PL $n$-manifold. A simplicial cell complex $K$ is called a \emph{colored triangulation} of $M$ if $K$ is $(n+1)$-colorable and its geometric realization $|K|$ is PL-homeomorphic to $M$.}
\end{definition}

\begin{definition}\label{def:simplicial maps}
{\rm 
Let $K$ and $L$ be simplicial cell complexes. A function $f: K \to L$ is called a {\em simplicial map} if for each $A\in K$, $f(A)\in L$ and the poset $\{f(B) | \, B \subset A\}$, ordered by inclusion, is same as the poset $\{C\in  L\, | \, C \subset f(A)\}$.
}
\end{definition}
If $f$ is bijective, then its inverse $f^{-1}$ is a simplicial map from $L$ into $K$ and $f$ is called an {\em isomorphism}. For a simplicial map  $f: K \to L$, one can extend it to a continuous map $|f|: |K|\to |L|$ canonically.

\subsection{Crystallization} \label{crystal}
The crystallization theory provides a tool for representing piecewise-linear (PL) manifolds of any dimension combinatorially, using edge-colored graphs. Throughout the article, by a graph, we mean a multigraph with no loops. Let $\Gamma = (V(\Gamma), E(\Gamma))$  be an edge-colored multigraph with no loops, where the edges are colored (or labeled) using $\Delta_n := \{0, 1, \dots, n\}$. The elements of the set $\Delta_n$ are referred to as the {\it colors} of $\Gamma$. The coloring of $\Gamma$ is called a \textit{proper edge-coloring} if any two adjacent edges in $\Gamma$ have different labels. In other words, for a proper edge-coloring, there exists a map $\gamma: E(\Gamma) \to \Delta_n$ such that  $\gamma(e_1) \ne \gamma(e_2)$ for any two adjacent edges $e_1$ and $e_2$. We denote a properly edge-colored graph as $(\Gamma,\gamma)$, or simply as $\Gamma$ if the coloring is understood. If a graph $\Gamma$ is such that the degree of each vertex in the graph is $n+1$, then it is said to be {\it $(n+1)$-regular}.
We refer to \cite{bm08} for standard terminologies on graphs. 

An {\it $(n+1)$-regular colored graph} is a pair $(\Gamma,\gamma)$, where $\Gamma$ is $(n+1)$-regular and $\gamma$ is a proper edge-coloring of $\Gamma$. For each $\mathcal{C} \subseteq \Delta_n$ with cardinality $k$, the graph $\Gamma_\mathcal{C} = (V(\Gamma), \gamma^{-1}(\mathcal{C}))$ is a $k$-regular colored graph with edge-coloring $\gamma|_{\gamma^{-1}(\mathcal{C})}$. For a color set $\{j_1,j_2,\dots,j_k\} \subset \Delta_n$, $g(\Gamma_{\{j_1,j_2, \dots, j_k\}})$ or $g_{\{j_1, j_2, \dots, j_k\}}$ denotes the number of connected components of the graph $\Gamma_{\{j_1, j_2, \dots, j_k\}}$. Denote the set $\Delta_n\setminus \{j\}$ by $\hat{j}$. A graph $(\Gamma,\gamma)$ is called {\it contracted} if the subgraph $\Gamma_{\hat{j}} = \Gamma_{\Delta_n\setminus \{j\}}$ is connected for all $j \in \Delta_n$.
 
For a properly edge-colored graph $(\Gamma,\gamma)$ with the color set $\Delta_n$, a corresponding $(n+1)$-colorable simplicial cell complex ${\mathcal K}(\Gamma)$ is constructed as follows:
\begin{itemize}
\item{} For each vertex $v\in V(\Gamma)$, take an $n$-simplex $\sigma(v)$ with vertices labeled by $\Delta_n$.

\item{} Corresponding to each edge of color $j$ between $v_1,v_2\in V(\Gamma)$, identify the ($n-1$)-faces of $\sigma(v_1)$ and $\sigma(v_2)$ opposite to the $j$-labeled vertices such that the vertices with the same labels coincide.
\end{itemize}

\noindent Note that all colors of \(\Delta_n\) need not appear in the edge-coloring of \((\Gamma,\gamma)\). The topological space \( |\mathcal{K}(\Gamma)| \) inherits a natural PL structure, and the graph \((\Gamma,\gamma)\) is said to {\it represent} \( |\mathcal{K}(\Gamma)| \). If $(\G,\gamma)$ is an $(n+1)$-regular colored graph and $|\mathcal{K}(\Gamma)|$ is PL-homeomorphic to a closed $n$-manifold $M$, then $(\Gamma,\gamma)$ is called a \textit{gem} (graph encoded manifold) of $M$. Note that, by construction, $\mathcal{K}(\Gamma)$ is a colored triangulation of $M$. The {\it disjoint star} of \(\sigma \in \mathcal{K}(\Gamma)\) is a simplicial cell complex that consists of all the \(n\)-simplices of \(\mathcal{K}(\Gamma)\) that contain \(\sigma\), with re-identification of only their \((n-1)\)-faces containing \(\sigma\) as in \(\mathcal{K}(\Gamma)\). The {\it disjoint link} of \(\sigma \in \mathcal{K}(\Gamma)\) is the subcomplex of its disjoint star generated by the simplices that do not intersect $\sigma$. 

\begin{proposition}\label{prop:one-one}
Let $\Gamma$ be a properly edge-colored graph with color set $\Delta_n$, and let $\mathcal{C}\subseteq \Delta_n$ with cardinality $k+1$. Then, the disjoint star of each $k$-simplex whose vertices are labeled by the elements of $\mathcal{C}$ corresponds to a connected component of the subgraph $\Gamma_{\Delta_n\setminus \mathcal C}$ induced by the colors in $\Delta_n\setminus \mathcal{C}$. In particular, for every subset $\mathcal{C} \subset \Delta_n$ with cardinality $k+1$, $\mathcal{K}(\Gamma)$ has as many $ k $-simplices with vertices labeled by $\mathcal{C}$ as there are connected components of $\Gamma_{\Delta_n \setminus \mathcal{C}}$.
\end{proposition}

\begin{proof}
Let $\sigma$ be a $k$-simplex of $\mathcal{K}(\Gamma)$ whose vertices are labeled by the elements of $\mathcal{C}$. By the construction of $\mathcal{K}(\Gamma)$, each $n$-simplex containing $\sigma$ corresponds to a vertex of $\Gamma$. Furthermore, two such $n$-simplices are identified along an $(n-1)$-face containing $\sigma$ if and only if the corresponding vertices of $\Gamma$ are joined by an edge whose color belongs to $\Delta_n \setminus \mathcal{C}$.

More precisely, if $c \in \mathcal{C}$, then the $(n-1)$-face opposite to the vertex labeled by $c$ does not contain $\sigma$, and hence the corresponding identification is not present in the disjoint star of $\sigma$. On the other hand, if $c \in \Delta_n \setminus \mathcal{C}$, then the $(n-1)$-face opposite to the vertex labeled by $c$ contains $\sigma$, and therefore the corresponding identification is retained in the disjoint star of $\sigma$. Hence, the adjacency relations among the $n$-simplices in the disjoint star of $\sigma$ are completely determined by the edges colored by the elements of $\Delta_n \setminus \mathcal{C}$. 

Moreover, any two $n$-simplices in the disjoint star can be joined by a sequence of face identifications containing $\sigma$, and therefore the corresponding vertices in the dual graph are connected by a path consisting only of edges colored by the elements of $\Delta_n \setminus \mathcal{C}$. Thus, the dual graph associated with the disjoint star of $\sigma$ is a connected component of $\Gamma_{\Delta_n \setminus \mathcal{C}}$. This completes the proof.
\end{proof}

 An $(n+1)$-regular colored graph is a gem if and only if each component of $\G_{\hat{i}}$ is a gem of $\mathbb{S}^{n-1}$ for all $i\in \D_n$ \cite{fgg86}. In particular, any $3$-regular colored graph is a gem. For further information on CW complexes and related concepts, refer to \cite{bj84}. An $(n+1)$-regular colored gem $(\Gamma,\gamma)$ of a closed manifold $M$ is called a {\em crystallization} of $M$ if it is contracted. In other words, the corresponding simplicial cell complex $ \mathcal{K}(\Gamma)$ has exactly $n+1$ vertices.

\begin{proposition}[\cite{fgg86}, Theorem $4$]\label{bipartite}
 Let $(\Gamma,\gamma)$ be a crystallization of an 
n-manifold $M$. Then M is orientable if and only if $\G$ is bipartite.
    
\end{proposition}

A graph with a proper edge-coloring using $n+1$ colors is called an {\it $(n+1)$-colored graph with boundary} if it is $n$-regular with respect to the color set $\{0,1,\dots,n-1\}$ and not an $(n+1)$-regular graph. The vertex with degree $n+1$ is called an {\it internal} vertex, and the vertex with degree $n$ is called a {\it boundary} vertex. For each  $(n+1)$-colored graph $(\Gamma,\gamma)$ with boundary,  we define its  boundary graph $(\partial \Gamma,\partial \gamma)$ as follows:

\begin{itemize}
\item{} There is a bijection between $V(\partial \Gamma)$ and the set of boundary vertices of $\Gamma$.

\item{} Two vertices $u_1,u_2 \in V(\partial \Gamma)$ are joined in $\partial \Gamma$ by an edge of color $j$ if and only if $u_1$ and $u_2$ are joined in $\Gamma$  by a path consisting of edges alternately colored $j$ and $n$.
\end{itemize} 
It is known that, every compact PL $n$-manifold $M$ admits a gem, i.e., an $(n+1)$-colored graph (either an $(n+1)$-regular colored graph or an $(n+1)$-colored graph with boundary) that represents $M$ (see \cite{bb21, fgg86} for more details).
For the study of topological properties of low-dimensional manifolds via colored graphs, see \cite{cc21, cm16}. Additionally, the notion of colored graphs for singular manifolds was introduced in \cite{ccg18}.

Let $(\G,\gamma)$ be an $(n+1)$-regular colored graph representing a closed $n$-manifold $M$. Suppose $u$ and $v$ are two vertices of $\G$ such that they are joined by $h$ edges of colors from $\{i_1,i_2,\dots i_h\}\subset \D_n$, and the vertices $u,v$ lie in different components of $\G_{\D_n\setminus \{i_1,i_2,\dots, i_h\}}$, where $1\le h \le n$. Then $u$ and $v$ are said to form a {\it dipole of type $h$}, where $1\le h \le n$. Let $\G^\prime$ be an $(n+1)$-regular colored graph with $V(\G^\prime)=V(\G)\setminus \{u,v\}$ obtained from $\G$ by canceling this $h$-dipole. If $p,q\in V(\G^\prime)$ are such that $p$ is connected to $u$ and $q$ is connected to $v$ by edges of color $j\in \D_n\setminus \{i_1,i_2,\dots, i_h\}$ in $\G$, then $p$ and $q$ are connected by an edge of color $j$ in $\G^\prime$. All other edges of $\G$ that are neither incident with $u$ nor with $v$ remain the same in $\G^\prime$ as well. The process of obtaining $\G^\prime$ from $\G$ is called the {\it cancellation of an $h$-dipole}. The reverse of this process is called the {\it addition of an $h$-dipole}. Dipoles of type $1$ and $n$ are called {\it degenerate} dipoles, and dipoles of type $h$, $1< h < n$, are called {\it non-degenerate} dipoles. 

\begin{proposition}[\cite{fgg86}, Corollary $2$]\label{dipole}
 Two $(n + 1)$-regular colored graphs represent homeomorphic manifolds 
if and only if they can be obtained from each other by addings and/or cancellings of dipoles. 
    
\end{proposition}

\begin{proposition}\label{gem_bipartite}
Let $(\Gamma,\gamma)$ be a gem of a closed $n$-manifold $M$. Then $\Gamma$ has an even number of vertices. Moreover, $M$ is orientable if and only if $\Gamma$ is bipartite.   
\end{proposition}

\begin{proof}
 Since $\G$ is an $(n+1)$-regular colored graph with the proper edge-coloring $\gamma$, the subgraph $\Gamma_{\{i\}}$ consists precisely of $i$-colored edges, for $i\in \D_n$. Suppose that $\Gamma_{\{i\}}$ contains $p$ edges. Then the gem $\G$ has $2p$ vertices. 

 Now, let $\G^\prime$ be a contracted $(n+1)$-regular colored graph obtained after cancelling all the $1$-dipoles of $\G$. By Proposition \ref{dipole}, $\G^\prime$ is a crystallization of $M$. Since the addition or cancellation of a $1$-dipole preserves the bipartiteness of the graph, $\G^\prime$ is bipartite if and only if $\G$ is bipartite.  By Proposition \ref{bipartite}, $M$ is orientable if and only if $\G^\prime$ is orientable. Hence, $M$ is orientable if and only if $\G$ is orientable. 
\end{proof}

Let $(\G,\gamma)$ be an $(n+1)$-regular colored graph representing a closed manifold $M$. Suppose $\Lambda_1\subset V(\G)$ and $\Lambda_2 \subset V(\G)$ be such that the subgraphs $A_1$ and $A_2$ generated by $\Lambda_1$ and $\Lambda_2$, respectively, represent $n$-dimensional balls. Let there be an isomorphism $\Phi:A_1 \to A_2$ such that $u$ and $\Phi(u)$ are joined by an edge of color $i$ for each $u\in \Lambda_1$, and $\Lambda_1$ and  $\Lambda_2$ lie in different components of $\G_{\hat{i}}$, where $i\in \Delta_n$ is fixed.
Consider a new  $(n+1)$-regular colored graph  $\G'$ obtained from $\G$ as follows. Let $V(\G')=V(\G)\setminus (\Lambda_1 \cup \Lambda_2)$. For two vertices $p$ and $q$ in $V(\G')$, if $p$ is connected to $u$ and $q$ is connected to $\Phi(u)$ by edges of color $j\in \D_n \setminus  \{i\}$ in $\G$ where $u\in \Lambda_1$, then $p$ and $q$ are joined by an edge of color $j$ in $\G'$. On the other hand, if $p$ and $q$ are joined by an edge of color $j\in \D_n$ in $\G$, then $p$ and $q$ are joined by an edge of color $j$ in $\G'$. The process  to obtain  $\G'$  from $\G$ is called a {\it polyhedral glue move} with respect to $(\Phi,\Lambda_1,\Lambda_2,i)$. From \cite[Section $4$]{fg82i}, it is known that $\G'$ also represents $M$. If $\Lambda_1$ and $ \Lambda_2$ are singleton sets, then this polyhedral glue move is called {\it simple glue move} or {\it cancellation of $1$-dipole}, where $\Lambda_1$ and $\Lambda_2$ forms $1$-dipole with respect to the color $i$. For more details, one can see \cite{fg82i}.

\subsection{Regular Genus of closed PL $n$-manifolds}\label{sec:genus}
For a closed connected surface, its regular genus is simply its genus. However, for
closed connected PL $n$-manifolds ($n \geq 3$), the regular genus is defined as follows.
From \cite{g81}, \cite[Section $4$]{ ga81i}, it is known that if $(\Gamma,\gamma)$ is a bipartite (resp. non-bipartite) $(n+1)$-regular colored graph which represents a closed connected orientable (resp. non-orientable) PL $n$-manifold $M$, then for each permutation $\varepsilon=(\varepsilon_0,\dots,\varepsilon_n)$ of $\Delta_n$, there exists a regular embedding of $\Gamma$ into an orientable (resp. non-orientable) surface $S$. A {\it regular embedding} is an embedding where each region is bounded by a bi-colored cycle with colors $\varepsilon_i,\varepsilon_{i+1}$ for some $i$ (addition is modulo $n + 1$). Moreover, the Euler characteristic $\chi_\varepsilon(\Gamma)$ of the orientable (resp. non-orientable) surface  $S$ satisfies
$$\chi_\varepsilon(\Gamma)=\sum_{i \in \mathbb{Z}_{n+1}}g_{\{\varepsilon_i,\varepsilon_{i+1}\}} + (1-n)\frac{\text{card}(V(\Gamma))}{2},$$ 
and the genus (resp. half of genus) $\rho_ \varepsilon$ of $S$ satisfies
$$\rho_ \varepsilon(\Gamma)=1-\frac{\chi_\varepsilon(\Gamma)}{2}.$$
The regular genus $\rho(\Gamma)$ of $(\Gamma,\gamma)$ is defined as
$$\rho(\Gamma)= \min \{\rho_{\varepsilon}(\Gamma) \ | \  \varepsilon \ \mbox{ is a permutation of } \ \Delta_n\}.$$
The regular genus of $M$ is defined as 
$$\mathcal G(M) = \min \{\rho(\Gamma) \ | \  (\Gamma,\gamma) \mbox{ represents } M\}.$$
	
The regular genus is a PL invariant. A closed manifold of dimension $n$ with regular genus $0$ is characterized as $\S$ \cite{fg82}. Some recent works on the regular genus can be found in the following articles \cite{b19, bb21, bc17}.	

\begin{figure}[ht]
\tikzstyle{vert}=[circle, draw, fill=black!100, inner sep=0pt, minimum width=3pt]
\tikzstyle{ver}=[]
\tikzstyle{extra}=[circle, draw, fill=black!50, inner sep=0pt, minimum width=4pt]
\tikzstyle{edge} = [draw,thick,-]
\centering
\begin{tikzpicture}[scale=.8]

\begin{scope}[shift={(-6,0)}]
\foreach \x/\y/\z in {-1/5/0,1/5/1,-1/3/2,1/3/3,-1/1/4,1/1/5,-1/-1/6,1/-1/7,-1/-3/8,1/-3/9,-1/-5/10,1/-5/11}
{\node[vert] (a\z) at (\x,\y){};}
\foreach \x/\y in {-1/-1.5,-1/-2,-1/-2.5,1/-1.5,1/-2,1/-2.5}{\node[extra] () at (\x,\y){};}
\foreach \x/\y in {a0/a2,a2/a4,a4/a6,a8/a10,a1/a3,a3/a5,a5/a7,a9/a11}
{\draw [edge]  (\x) -- (\y);}

\foreach \x/\y/\z in {a0/a1/4.5,a0/a1/5.5,a2/a3/2.5,a2/a3/3.5,a4/a5/.5,a4/a5/1.5,a6/a7/-.5,a6/a7/-1.5,a8/a9/-2.5,a8/a9/-3.5,a10/a11/-4.5,a10/a11/-5.5}{
\draw[edge]  plot [smooth,tension=1.5] coordinates{(\x)(0,\z)(\y)};}

\node[ver] () at (-1.5,5.25){\small{$ v_1^1$}};
\node[ver] () at (1.5,5.25){\small{$ v_2^1$}};
\node[ver] () at (-1.5,3.25){\small{$ v_1^2$}};
\node[ver] () at (1.5,3.25){\small{$ v_2^2$}};
\node[ver] () at (-1.5,1.25){\small{$ v_1^3$}};
\node[ver] () at (1.5,1.25){\small{$ v_2^3$}};
\node[ver] () at (-1.5,-0.75){\small{$ v_1^4$}};
\node[ver] () at (1.5,-0.75){\small{$ v_2^4$}};
\node[ver] () at (-1.5,-2.75){\small{$ v_1^n$}};
\node[ver] () at (1.5,-2.75){\small{$ v_2^n$}};
\node[ver] () at (-1.7,-5.25){\small{$ v_1^{n+1}$}};
\node[ver] () at (1.7,-5.25){\small{$ v_2^{n+1}$}};

\node[ver] () at (0,5.1){\tiny{$0,1,\dots,$}};
\node[ver] () at (0,4.8){\tiny{$n-2$}};
\node[ver] () at (0,3.1){\tiny{$1,2,\dots,$}};
\node[ver] () at (0,2.8){\tiny{$n-2,n$}};
\node[ver] () at (0,1){\tiny{$2,3,\dots,n$}};
\node[ver] () at (0,-1){\tiny{$3,\dots,n,0$}};
\node[ver] () at (0,-2.8){\tiny{$n-1,n,$}};
\node[ver] () at (0,-3.1){\tiny{$0,\dots,n-4$}};
\node[ver] () at (0,-4.8){\tiny{$n-1,$}};
\node[ver] () at (0,-5.1){\tiny{$0,\dots,n-3$}};

\foreach \x/\y/\z in {-1.5/4/$n-1$,1.5/4/$n-1$,-1.3/2/0,1.3/2/0,-1.3/0/1,1.3/0/1,-1.5/-4/$n-2$,1.5/-4/$n-2$}{
\node[ver] () at (\x,\y){\tiny{\z}};}
\node[ver] () at (0,-6.45){$(a)$};
\end{scope}

\begin{scope}[shift={(0,0)}]
\foreach \x/\y/\z in {-1/5/0,1/5/1,-1/3/2,1/3/3,-1/1/4,1/1/5,-1/-1/6,1/-1/7,-1/-3/8,1/-3/9,-1/-5/10,1/-5/11}
{\node[vert] (a\z) at (\x,\y){};}
\foreach \x/\y in {-1/-1.5,-1/-2,-1/-2.5,1/-1.5,1/-2,1/-2.5}{\node[extra] () at (\x,\y){};}
\foreach \x/\y in {a0/a2,a2/a4,a4/a6,a8/a10,a1/a3,a3/a5,a5/a7,a9/a11}
{\draw [edge]  (\x) -- (\y);}

\foreach \x/\y/\z in {a0/a1/4.5,a0/a1/5.5,a2/a3/2.5,a2/a3/3.5,a4/a5/.5,a4/a5/1.5,a6/a7/-.5,a6/a7/-1.5,a8/a9/-2.5,a8/a9/-3.5,a10/a11/-4.5,a10/a11/-5.5}{
\draw[edge]  plot [smooth,tension=1.5] coordinates{(\x)(0,\z)(\y)};}

\node[ver] () at (-1.5,5.25){\small{$ v_1^1$}};
\node[ver] () at (1.5,5.25){\small{$ v_2^1$}};
\node[ver] () at (-1.5,3.25){\small{$ v_1^2$}};
\node[ver] () at (1.5,3.25){\small{$ v_2^2$}};
\node[ver] () at (-1.5,1.25){\small{$ v_1^3$}};
\node[ver] () at (1.5,1.25){\small{$ v_2^3$}};
\node[ver] () at (-1.5,-0.75){\small{$ v_1^4$}};
\node[ver] () at (1.5,-0.75){\small{$ v_2^4$}};
\node[ver] () at (-1.5,-2.75){\small{$ v_1^n$}};
\node[ver] () at (1.5,-2.75){\small{$ v_2^n$}};
\node[ver] () at (-1.7,-5.25){\small{$ v_1^{n+1}$}};
\node[ver] () at (1.7,-5.25){\small{$ v_2^{n+1}$}};

\node[ver] () at (0,5.1){\tiny{$0,1,\dots,$}};
\node[ver] () at (0,4.8){\tiny{$n-2$}};
\node[ver] () at (0,3.1){\tiny{$1,2,\dots,$}};
\node[ver] () at (0,2.8){\tiny{$n-2,n$}};
\node[ver] () at (0,1){\tiny{$2,3,\dots,n$}};
\node[ver] () at (0,-1){\tiny{$3,\dots,n,0$}};
\node[ver] () at (0,-2.8){\tiny{$n-1,n,$}};
\node[ver] () at (0,-3.1){\tiny{$0,\dots,n-4$}};
\node[ver] () at (0,-4.8){\tiny{$n-1,$}};
\node[ver] () at (0,-5.1){\tiny{$0,\dots,n-3$}};

\foreach \x/\y/\z in {-1.5/4/$n-1$,1.5/4/$n-1$,-1.3/2/0,1.3/2/0,-1.3/0/1,1.3/0/1,-1.5/-4/$n-2$,1.5/-4/$n-2$}{
\node[ver] () at (\x,\y){\tiny{\z}};}

\draw[edge] plot[smooth,tension=1] coordinates{(a0)(-2.5,2.5)(-2.5,-2.5)(a10)};

\node[ver] () at (-3,0){\tiny{$n$}};
\node[ver] () at (3,0){\tiny{$n$}};

\draw[edge] plot[smooth,tension=1] coordinates{(a1)(2.5,2.5)(2.5,-2.5)(a11)};

\node[ver] () at (0,-6.45){$(b)$};
\end{scope}

\begin{scope}[shift={(7,0.5)}]
\foreach \x/\y/\z in {-1/5/0,1/5/1,-1/3/2,1/3/3,-1/1/4,1/1/5,-1/-1/6,1/-1/7,-1/-3/8,1/-3/9,-1/-5/10,1/-5/11}
{\node[vert] (a\z) at (\x,\y){};}
\foreach \x/\y in {-1/-1.5,-1/-2,-1/-2.5,1/-1.5,1/-2,1/-2.5}{\node[extra] () at (\x,\y){};}
\foreach \x/\y in {a0/a2,a2/a4,a4/a6,a8/a10,a1/a3,a3/a5,a5/a7,a9/a11}
{\draw [edge]  (\x) -- (\y);}

\foreach \x/\y/\z in {a0/a1/4.5,a0/a1/5.5,a2/a3/2.5,a2/a3/3.5,a4/a5/.5,a4/a5/1.5,a6/a7/-.5,a6/a7/-1.5,a8/a9/-2.5,a8/a9/-3.5,a10/a11/-4.5,a10/a11/-5.5}{
\draw[edge]  plot [smooth,tension=1.5] coordinates{(\x)(0,\z)(\y)};}

\node[ver] () at (-1.5,5.25){\small{$ v_1^1$}};
\node[ver] () at (1.5,5.25){\small{$ v_2^1$}};
\node[ver] () at (-1.5,3.25){\small{$ v_1^2$}};
\node[ver] () at (1.5,3.25){\small{$ v_2^2$}};
\node[ver] () at (-1.5,1.25){\small{$ v_1^3$}};
\node[ver] () at (1.5,1.25){\small{$ v_2^3$}};
\node[ver] () at (-1.5,-0.75){\small{$ v_1^4$}};
\node[ver] () at (1.5,-0.75){\small{$ v_2^4$}};
\node[ver] () at (-1.5,-2.75){\small{$ v_1^n$}};
\node[ver] () at (1.5,-2.75){\small{$ v_2^n$}};
\node[ver] () at (-1.7,-5.25){\small{$ v_1^{n+1}$}};
\node[ver] () at (1.7,-5.25){\small{$ v_2^{n+1}$}};

\node[ver] () at (0,5.1){\tiny{$0,1,\dots,$}};
\node[ver] () at (0,4.8){\tiny{$n-2$}};
\node[ver] () at (0,3.1){\tiny{$1,2,\dots,$}};
\node[ver] () at (0,2.8){\tiny{$n-2,n$}};
\node[ver] () at (0,1){\tiny{$2,3,\dots,n$}};
\node[ver] () at (0,-1){\tiny{$3,\dots,n,0$}};
\node[ver] () at (0,-2.8){\tiny{$n-1,n,$}};
\node[ver] () at (0,-3.1){\tiny{$0,\dots,n-4$}};
\node[ver] () at (0,-4.8){\tiny{$n-1,$}};
\node[ver] () at (0,-5.1){\tiny{$0,\dots,n-3$}};

\foreach \x/\y/\z in {-1.5/4/$n-1$,1.5/4/$n-1$,-1.3/2/0,1.3/2/0,-1.3/0/1,1.3/0/1,-1.5/-4/$n-2$,1.5/-4/$n-2$}{
\node[ver] () at (\x,\y){\tiny{\z}};}

\draw[edge] plot[smooth,tension=1] coordinates{(a0)(-2.5,2.5)(-2,-6)(a11)};

\node[ver] () at (-3,0){\tiny{$n$}};
\node[ver] () at (3,0){\tiny{$n$}};

\draw[edge] plot[smooth,tension=1] coordinates{(a1)(2.5,2.5)(2,-6)(a10)};

\node[ver] () at (0,-7){$(c)$};

\end{scope}

\end{tikzpicture}
\caption{($a$) The standard crystallization of $\mathbb{S}^{n-1}\times I$, $n\geq 2$, ($b$) The standard crystallization of $\mathbb{S}^{n-1}\times \mathbb{S}^{1}$ when $n$ is odd, and ($c$) The standard crystallization of $\mathbb{S}^{n-1}\times \mathbb{S}^{1}$ when $n$ is even.}\label{fig:crys}
\end{figure}
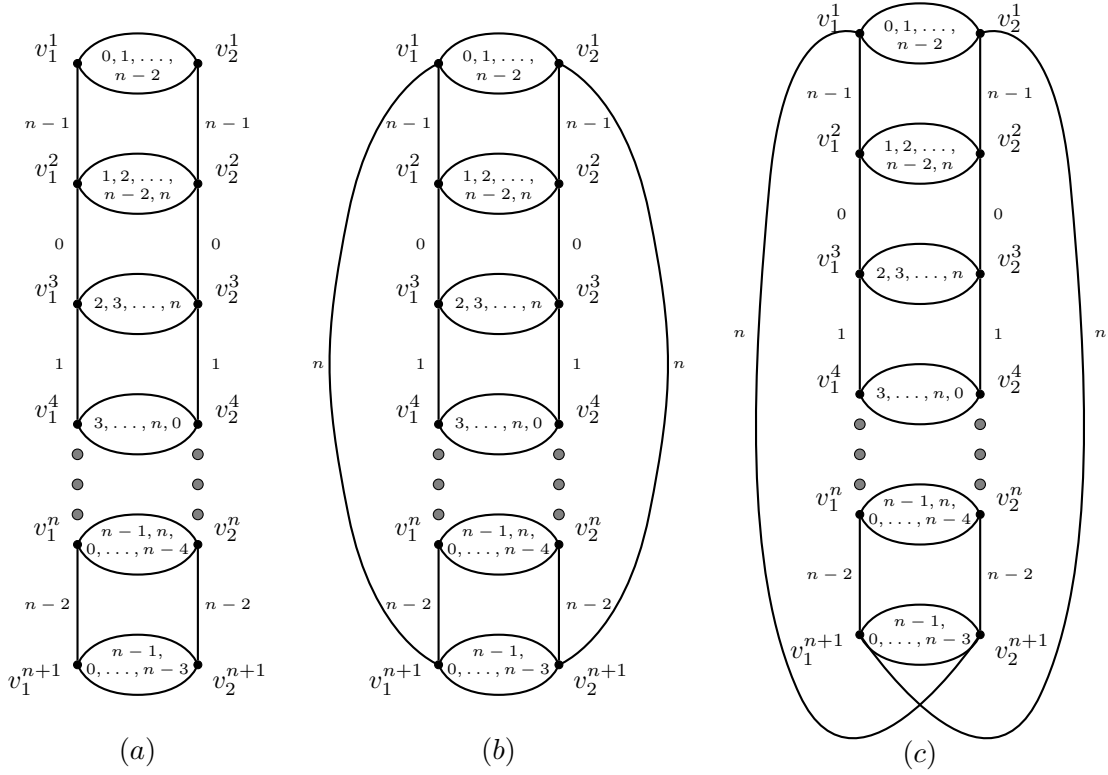

\section{Degree of simplicial maps}
Let \(\mathcal{K}\) be an oriented colored triangulation of a closed orientable \(n\)-manifold. 
Two \(k\)-simplices \(\sigma\) and \(\sigma'\) are said to be \emph{adjacent} if their 
intersection contains a \((k-1)\)-simplex. Since \(\mathcal{K}\) is oriented, its \(n\)-simplices 
are oriented so that any two adjacent $n$-simplices induce opposite orientations 
on their common \((n-1)\)-faces. Since the vertices of every \(n\)-simplex are 
colored by \(0, 1, \dots, n\), an orientation is determined by the ordering of the 
vertices according to their colors, up to even permutations (resp. up to odd permutations, 
giving the opposite orientation). Since \(\mathcal{K}\) is oriented, the orientations 
to its \(n\)-simplices satisfy the following condition: whenever an \(n\)-simplex is oriented by \([0, \dots, n]\) (resp. \(-[0, \dots, n]\)), every adjacent \(n\)-simplex must be oriented 
by \(-[0, \dots, n]\) (resp. \([0, \dots, n]\)).

An \(n\)-simplex with orientation \([0, \dots, n]\) is called \emph{positive}, 
while one with orientation \(-[0, \dots, n]\) is called \emph{negative}. 
Propagating this rule, a simplex adjacent to a positive simplex is declared 
negative, and a simplex adjacent to a negative simplex is declared positive. 
Since \(\mathcal{K}\) is oriented and triangulates an orientable \(n\)-manifold, this assignment 
is consistent, and hence every \(n\)-simplex of \(\mathcal{K}\) is uniquely 
classified as either positive or negative.

Let $(\Gamma,\gamma)$ be the $(n+1)$-regular colored graph representing 
$\mathcal{K}$. The vertices of $\Gamma$ corresponding to positive simplices 
are called \emph{positive vertices}, and the remaining vertices are called 
\emph{negative vertices}.

\begin{lemma}
Let $(\Gamma,\gamma)$ be an $(n+1)$-regular colored graph with $2p$ vertices, representing a closed orientable $n$-manifold $M$. Then the corresponding colored triangulation $\mathcal{K}(\Gamma)$ of $M$ can be oriented. Moreover, the numbers of positive and negative vertices are both equal to $p$.
\end{lemma}

\begin{proof}
It follows from Proposition~\ref{gem_bipartite} that $\Gamma$ is bipartite and has an even number of vertices, say $2p$. Let $V^+$ and $V^-$ denote the bipartition of the vertex set. Since, for each $i \in \D_n$, the graph $\Gamma_{\{i\}}$ has exactly $p$ edges, it follows that both $V^+$ and $V^-$ contain exactly $p$ vertices.

Assign to each $n$-simplex of $\mathcal{K}(\Gamma)$ corresponding to a vertex in $V^+$ the orientation $[0,\dots,n]$, and to each $n$-simplex corresponding to a vertex in $V^-$ the orientation $-[0,\dots,n]$. Then any two adjacent simplices induce opposite orientations on their common $(n-1)$-faces. Therefore, the colored triangulation $\mathcal{K}(\Gamma)$ is oriented. Accordingly, vertices in $V^+$ are called positive and those in $V^-$ are called negative. Thus, the numbers of positive and negative vertices are both equal to $p$.
\end{proof}

Let $\sigma_1,\sigma_2,\dots,\sigma_k$ be $n$-simplices of $\mathcal{K}$. The \emph{algebraic number} of the set $\{\sigma_1,\sigma_2,\dots,\sigma_k\}$ is defined to be the sum $\sum_{i=1}^{k}\operatorname{sign}(\sigma_i)$, where $\operatorname{sign}(\sigma)$ is $1$ or $-1$ according as $\sigma$ is a positive or a negative simplex, respectively. Similarly, for vertices $v_1,v_2,\dots,v_k$ of $\Gamma$, the \emph{algebraic number} of $\{v_1,v_2,\dots,v_k\}$ is defined to be the sum $\sum_{i=1}^{k}\operatorname{sign}(v_i)$, where $\operatorname{sign}(v)$ is $1$ or $-1$ depending on whether $v$ is a positive vertex or a negative vertex, respectively.

It is known that the $n^{th}$ homology (with $\mathbb{Z}$-coefficients) of a closed orientable $n$-manifold is isomorphic to $\mathbb{Z}$. 

\begin{definition}{\rm
Let $\mathcal K_1$ and $\mathcal K_2$ be oriented colored triangulations of closed orientable $n$-manifolds $M_1$ and $M_2$, respectively, and let $f:\mathcal K_1 \to \mathcal K_2$ be a simplicial map. Then $f$ induces a map $f_{\ast}: H_n(\mathcal K_1, \mathbb{Z})\to H_n(\mathcal K_2, \mathbb{Z})$, given by $f_{\ast}(a^{\mathcal K_1})=d_{f}\cdot b^{\mathcal K_2}$, where $d_{f}$ is an integer, called the {\it degree} of the map $f$, and $a^{\mathcal K_1}$ and $b^{\mathcal K_2}$ denote the generators of the homology groups $H_n(\mathcal K_1,\mathbb{Z})\cong \mathbb{Z}$ and $H_n(\mathcal K_2,\mathbb{Z})\cong \mathbb{Z}$, respectively.}
\end{definition}
Observe that $ a^{\mathcal{K}_1} $ is represented by the sum of positive $n$-simplices minus the sum of negative $n$-simplices in $ \mathcal{K}_1 $, and $ b^{\mathcal{K}_2} $ is represented by the sum of positive $n$-simplices minus the sum of negative $n$-simplices in $ \mathcal{K}_2 $.

\begin{definition}\label{def:minimal map}
{\rm
Let $M_1$ and $M_2$ be two closed orientable $n$-manifolds, and let $\mathcal{K}_1$ and $\mathcal{K}_2$ be oriented colored triangulations of $M_1$ and $M_2$, respectively. 
A map $f:\mathcal{K}_1 \to \mathcal{K}_2$ is called a \emph{minimal simplicial degree $d$ map with respect to $\mathcal K_2$} if $f$ has degree $d$ and there does not exist colored triangulation $\mathcal{K}_1'$ of $M_1$, together with a simplicial map 
$f' : \mathcal{K}_1' \to \mathcal{K}_2$ of degree $d$ such that $\mathcal{K}_1'$ has fewer facets than $\mathcal{K}_1$.
In particular, if $M_1 = M_2 = M$, then $f$ is called a \emph{minimal simplicial degree $d$ self-map with respect to $\mathcal K_2$}.}
\end{definition}

\begin{lemma}\label{gmap}
Let $(\Gamma_1,\gamma_1)$ (resp.  $(\Gamma_2,\gamma_2)$) be the $(n+1)$-regular colored graph representing an oriented colored triangulation $\mathcal{K}_1$ (resp. $\mathcal{K}_2$)  of a closed orientable $n$-manifold $M_1$ (resp. $M_2$).
 Suppose $g: V(\G_1)\to V(\G_2)$ is a map such that whenever $u$ and $v$ are joined by an edge of color $i\in \D_n$ in $\G_1$, the vertices $g(u)$ and $g(v)$ are either equal or joined by an edge of color $i$ in $\G_2$. Then $g$ induces a simplicial map $f:\mathcal K_1\to \mathcal K_2$.
\end{lemma}
\begin{proof}
For each $u\in \G_1$, let $\sigma_u$, $\sigma_{g(u)}$ be the $n$-simplices 
corresponding to $u$ and $g(u)$ in $\mathcal K_1$ and $\mathcal K_2$, respectively.
Define a map $f:\mathcal K_1 \to \mathcal K_2$ by sending the $n$-simplex $\sigma_u$ to $\sigma_{g(u)}=f(\sigma_u)$ such that 
each vertex of $\sigma_u$ is mapped to the vertex of
$\sigma_{g(u)}$ which has the same color. Note that the map $f$ preserves the dimension of a simplex in $\mathcal K_1$.

Let  $\sigma_u$ and $\sigma_v$ be the the $n$-simplices in $\mathcal K_1$
corresponding to $u$ and $v$ in $\G_1$, respectively. 
Suppose $\sigma_u$ and $\sigma_v$ share a common $k$-simplex $\tau$ labeled by the  color set $C\subset \D_n$ in $\mathcal K_1$. Then $\sigma_u$ and $\sigma_v$ belong to the disjoint star of the $k$-simplex $\tau$. Therefore, by Proposition \ref{prop:one-one}, $u$ and $v$ lie in the same component of the subgraph, induced by the color set $\Delta_n\setminus C$, of $\G_1$. By the definition of $g$, the vertices $g(u)$ and $g(v)$ lie in the same component of the subgraph, induced by the color set $\Delta_n\setminus C$, of $\G_2$. Hence, $\sigma_{g(u)}=f(\sigma_u)$ and $\sigma_{g(v)}=f(\sigma_v)$ share a common $k$-simplex 
$f(\tau)$ labeled by the color set $C$. Thus, $f$ is well defined. 

Moreover, by the construction of $f$, for each $A\in \mathcal K_1$, $f(A)\in \mathcal K_2$, and the poset 
$\{f(B)\mid B\subset A\}$, ordered by inclusion, coincides with the poset 
$\{C\in \mathcal K_2 \mid C\subset f(A)\}$. Therefore, $f$ is a simplicial map.
\end{proof}

If $g$ is not surjective, then the degree of $f$ is zero. If $g$ is surjective, then $d_f$ is the same as the product of $sign(v)$ and the algebraic number of $g^{-1}(v)$, where $v \in \G_2 $.

\section{Main Results}
In this section, we will construct minimal simplicial degree $d$ self-maps of $\mathbb{S}^{n}$ and $\mathbb{S}^{n-1}\times \mathbb{S}^1$, for $n \geq 2$ and $d \in \mathbb{Z}$.

\begin{figure}[ht]
\tikzstyle{vert}=[circle, draw, fill=black!100, inner sep=0pt, minimum width=3pt]
\tikzstyle{ver}=[]
\tikzstyle{extra}=[circle, draw, fill=black!50, inner sep=0pt, minimum width=4pt]
\tikzstyle{edge} = [draw,thick,-]
\centering
\begin{tikzpicture}[scale=.8]

\begin{scope}[shift={(0,0)}]
\foreach \x/\y/\z in
{-8/0/1,-5.5/0/2,-2.5/0/3,0/0/4,5/0/5,7.5/0/6}
{\node[vert] (\z) at (\x,\y){};}

\foreach \x/\y in
{1.5/0,2.5/0,3.5/0}
{\node[extra] at (\x,\y){};}

\foreach \x/\y/\z in
{-8.5/-0.5/v_1,-5/-0.5/v_2,-3/-0.5/v_3,0.5/-0.5/v_4,4.3/-0.5/v_{2d-1},8/-0.5/v_{2d}}
{\node[ver] () at (\x,\y){$\z$};}

\draw[edge] (2)--(3);

\foreach \x/\y/\z in 
{1/2/-6.8,3/4/-1.3,5/6/6.2}
{\draw[edge] plot [smooth,tension=1] coordinates{(\x)(\z,0.6)(\y)};}

\foreach \x/\y/\z in 
{1/2/-6.8,3/4/-1.3,5/6/6.2}
{\draw[edge] plot [smooth,tension=1] coordinates{(\x)(\z,-0.6)(\y)};}

\foreach \x in
{-6.8,-1.3,6.2}
{\node[ver] () at (\x,-0.3){\tiny{$n-1$}}; }

\foreach \x in
{-6.8,-1.3,6.2}
{\node[ver] () at (\x,0.1){\tiny{$0,1,\dots,$}}; }

\draw[edge] plot[smooth,tension=0.5] coordinates{(1) (-6.5,-1) (6,-1) (6)};

\node[ver] () at (0,-1.3){\tiny{$n$}};
\node[ver] () at (-3.9,0.2){\tiny{$n$}};

\end{scope}

\begin{scope}[shift={(0,-2.5)}]
\foreach \x/\y/\z in
{-1.5/0/1,1.5/0/2}
{\node[vert] (\z) at (\x,\y){};}

\foreach \x/\y/\z in
{-2/0/v^1,2/0/v^2}
{\node[ver] () at (\x,\y){$\z$};}

\draw[edge] plot[smooth,tension=1] coordinates{(1) (0,-0.6) (2)};

\draw[edge] plot[smooth,tension=1] coordinates{(1) (0,0.6) (2)};

\node[ver] () at (0,0){\tiny{$0,1,\dots,n$}};

\end{scope}

\end{tikzpicture}
\caption{A gem of $\mathbb{S}^{n}$ corresponding to degree $d$ and the standard crystallization of $\mathbb{S}^{n}$.}\label{fig:Sn}
\end{figure}
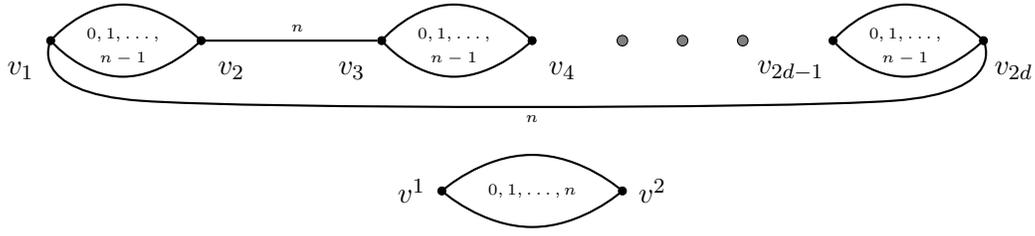

\begin{theorem} \label{theorem:main1}
Let $M$ be a closed orientable $n$-manifold, where $n \geq 1$. Then, for each $d \in \mathbb{Z}$, there exists a minimal simplicial degree $d$ map from $M$ to $\mathbb{S}^{n}$, with respect to the standard $2$-facet colored triangulation of $\mathbb{S}^{n}$. 
\end{theorem}

\begin{proof}
Let $(\G',\gamma')$ be the standard crystallization of $\mathbb{S}^{n}$ consisting of two vertices, $v^1$ and $v^2$ (Figure \ref{fig:Sn}), where $v^1$ is a positive vertex and $v^2$ is a negative vertex. Let $(\G,\gamma)$ be a crystallization of $M$ with the minimal number of vertices, $2p$. Since $ M $ is orientable, $ \Gamma $ is bipartite. Therefore, the vertices can be divided into two sets, $ A $ and $ B $. Let $ A $ consist of positive vertices and $ B $ consist of negative vertices. Let $ g: V(\Gamma) \to V(\Gamma') $ be a map where $ u \in V(\Gamma) $ is mapped to one of the two vertices of $ V(\Gamma') $, say $ v^1 $. If $ v \in V(\Gamma) $ is adjacent to $ u $, then $ g(v) $ can be either $ v^1 $ or $ v^2 $. Since $ \Gamma' $ consists of only two vertices, this implies that $ g(u) $ and $ g(v) $ are either the same, or an edge of color $ i $ is incident with $ g(u) $ and $ g(v) $ whenever $ u $ and $ v $ are joined by an edge of color $ i $ in $ \Gamma $. Therefore, a map from $V(\G)$ to $V(\G')$ induces a simplicial map from $M$ to $\mathbb{S}^{n}$ (cf. Lemma \ref{gmap}). If $g_0: V(\G)\to V(\G')$ is a constant function, then the degree of the simplicial map induced by $g_0$ is $0$, and it is minimal because $\G$ is minimal. Now, consider $g':V(\G')\to V(\G')$ such that $g'(v^1)=v^2$ and $g'(v^2)=v^1$. Clearly, the degree of the induced simplicial self-map of $\mathbb{S}^{n}$ is $-1$. Thus, if $g:\G \to \G'$ induces a simplicial map of degree $d$, then $g' \circ g$ induces a simplicial map of the degree $-d$. Clearly, $g$ induces a minimal simplicial map of degree $d$ if and only if $g' \circ g$ induces a minimal simplicial degree $-d$ map.

\noindent \textbf{Case $1$ ($ |d| \le p$):} Let $d$ be a positive integer. Define $g_d: V(\G)\to V(\G')$ such that any $d$ vertices of $B$ are assigned $v^2$, and all the remaining vertices of $\G$ are assigned $v^1$. Therefore, this map $g_d$ will induce the simplicial map $f:M \to \mathbb{S}^{n}$ of degree $d$. These maps are minimal as $\G$ is minimal. As pointed out above, $g_{-d}=g' \circ g_d$ will induce a simplicial map from $M$ to $\mathbb{S}^{n}$ of degree $-d$. \\
\noindent \textbf{Case $2$ ($|d| > p$):} Let $d$ be a positive integer. Adding $d-p$ number of $n$-dipoles to $\G$, we get a gem $\G_1$ of $M$ which consists of $d$ positive (resp. negative) vertices. Define $g_d: V(\G_1)\to V(\G')$ such that all the positive (resp. negative) vertices get mapped to $v^1$ (resp. $v^2$). Therefore, the map $g_d$ induces the simplicial map $f:M\to \mathbb{S}^{n}$ of degree $d$. These maps are minimal because, in the inverse image of a positive (resp. negative) simplex, there are exactly $d$ positive (resp. negative) simplices, and $\G'$ is minimal. Again, $g_{-d}=g' \circ g_d$ will induce a simplicial map from $M$ to $\mathbb{S}^{n}$ of degree $-d$.  
\end{proof}

\begin{figure}[ht]
\tikzstyle{vert}=[circle, draw, fill=black!100, inner sep=0pt, minimum width=3pt]
\tikzstyle{ver}=[]
\tikzstyle{extra}=[circle, draw, fill=black!50, inner sep=0pt, minimum width=4pt]
\tikzstyle{edge} = [draw,thick,-]
\centering
\begin{tikzpicture}[scale=.8]

\begin{scope}[shift={(0,0)}]
\foreach \x/\y/\z in {-1/5/0,1/5/1,-1/3/2,1/3/3,-1/1/4,1/1/5,-1/-1/6,1/-1/7,-1/-3/8,1/-3/9,-1/-5/10,1/-5/11}
{\node[vert] (a\z) at (\x,\y){};}
\foreach \x/\y in {-1/-1.5,-1/-2,-1/-2.5,1/-1.5,1/-2,1/-2.5}{\node[extra] () at (\x,\y){};}
\foreach \x/\y in {a0/a2,a2/a4,a4/a6,a8/a10,a1/a3,a3/a5,a5/a7,a9/a11}
{\draw [edge]  (\x) -- (\y);}

\foreach \x/\y/\z in {a0/a1/4.5,a0/a1/5.5,a2/a3/2.5,a2/a3/3.5,a4/a5/.5,a4/a5/1.5,a6/a7/-.5,a6/a7/-1.5,a8/a9/-2.5,a8/a9/-3.5,a10/a11/-4.5,a10/a11/-5.5}{
\draw[edge]  plot [smooth,tension=1.5] coordinates{(\x)(0,\z)(\y)};}

\node[ver] () at (-1.5,5.25){\small{$ v_1^1$}};
\node[ver] () at (1.5,5.25){\small{$ v_2^1$}};
\node[ver] () at (-1.5,3.25){\small{$ v_1^2$}};
\node[ver] () at (1.5,3.25){\small{$ v_2^2$}};
\node[ver] () at (-1.5,1.25){\small{$ v_1^3$}};
\node[ver] () at (1.5,1.25){\small{$ v_2^3$}};
\node[ver] () at (-1.5,-0.75){\small{$ v_1^4$}};
\node[ver] () at (1.5,-0.75){\small{$ v_2^4$}};
\node[ver] () at (-1.5,-2.75){\small{$ v_1^n$}};
\node[ver] () at (1.5,-2.75){\small{$ v_2^n$}};
\node[ver] () at (-1.7,-5.25){\small{$ v_1^{n+1}$}};
\node[ver] () at (1.7,-5.25){\small{$ v_2^{n+1}$}};

\node[ver] () at (0,5.1){\tiny{$0,1,\dots,$}};
\node[ver] () at (0,4.8){\tiny{$n-2$}};
\node[ver] () at (0,3.1){\tiny{$1,2,\dots,$}};
\node[ver] () at (0,2.8){\tiny{$n-2$}};
\node[ver] () at (0,1){\tiny{$2,\dots,n-1$}};
\node[ver] () at (0,-0.8){\tiny{$3,\dots,$}};
\node[ver] () at (0,-1.1){\tiny{$n-1,0$}};
\node[ver] () at (0,-2.8){\tiny{$n-1,$}};
\node[ver] () at (0,-3.1){\tiny{$0,\dots,n-4$}};
\node[ver] () at (0,-4.8){\tiny{$n-1,$}};
\node[ver] () at (0,-5.1){\tiny{$0,\dots,n-3$}};

\foreach \x/\y/\z in {-1.5/4/$n-1$,1.5/4/$n-1$,-1.3/2/0,1.3/2/0,-1.3/0/1,1.3/0/1,-1.5/-4/$n-2$,1.5/-4/$n-2$}{
\node[ver] () at (\x,\y){\tiny{\z}};}
\end{scope}

\begin{scope}[shift={(6,0)}]
\foreach \x/\y/\z in {-1/5/0,1/5/1,-1/3/2,1/3/3,-1/1/4,1/1/5,-1/-1/6,1/-1/7,-1/-3/8,1/-3/9,-1/-5/10,1/-5/11}
{\node[vert] (b\z) at (\x,\y){};}
\foreach \x/\y in {-1/-1.5,-1/-2,-1/-2.5,1/-1.5,1/-2,1/-2.5}{\node[extra] () at (\x,\y){};}
\foreach \x/\y in {b0/b2,b2/b4,b4/b6,b8/b10,b1/b3,b3/b5,b5/b7,b9/b11}
{\draw [edge]  (\x) -- (\y);}

\foreach \x/\y/\z in {b0/b1/4.5,b0/b1/5.5,b2/b3/2.5,b2/b3/3.5,b4/b5/.5,b4/b5/1.5,b6/b7/-.5,b6/b7/-1.5,b8/b9/-2.5,b8/b9/-3.5,b10/b11/-4.5,b10/b11/-5.5}{
\draw[edge]  plot [smooth,tension=1.5] coordinates{(\x)(0,\z)(\y)};}

\node[ver] () at (-1.5,5.25){\small{$ v_3^1$}};
\node[ver] () at (1.5,5.25){\small{$ v_4^1$}};
\node[ver] () at (-1.5,3.25){\small{$ v_3^2$}};
\node[ver] () at (1.5,3.25){\small{$ v_4^2$}};
\node[ver] () at (-1.5,1.25){\small{$ v_3^3$}};
\node[ver] () at (1.5,1.25){\small{$ v_4^3$}};
\node[ver] () at (-1.5,-0.75){\small{$ v_3^4$}};
\node[ver] () at (1.5,-0.75){\small{$ v_4^4$}};
\node[ver] () at (-1.5,-2.75){\small{$ v_3^n$}};
\node[ver] () at (1.5,-2.75){\small{$ v_4^n$}};
\node[ver] () at (-1.7,-5.25){\small{$ v_3^{n+1}$}};
\node[ver] () at (1.7,-5.25){\small{$ v_4^{n+1}$}};

\node[ver] () at (0,5.1){\tiny{$0,1,\dots,$}};
\node[ver] () at (0,4.8){\tiny{$n-2$}};
\node[ver] () at (0,3.1){\tiny{$1,2,\dots,$}};
\node[ver] () at (0,2.8){\tiny{$n-2$}};
\node[ver] () at (0,1){\tiny{$2,\dots,n-1$}};
\node[ver] () at (0,-0.8){\tiny{$3,\dots,$}};
\node[ver] () at (0,-1.1){\tiny{$n-1,0$}};
\node[ver] () at (0,-2.8){\tiny{$n-1,$}};
\node[ver] () at (0,-3.1){\tiny{$0,\dots,n-4$}};
\node[ver] () at (0,-4.8){\tiny{$n-1,$}};
\node[ver] () at (0,-5.1){\tiny{$0,\dots,n-3$}};

\foreach \x/\y/\z in {-1.5/4/$n-1$,1.5/4/$n-1$,-1.3/2/0,1.3/2/0,-1.3/0/1,1.3/0/1,-1.5/-4/$n-2$,1.5/-4/$n-2$}{
\node[ver] () at (\x,\y){\tiny{\z}};}
\end{scope}

\begin{scope}[shift={(13.5,0)}]
\foreach \x/\y/\z in {-1/5/0,1/5/1,-1/3/2,1/3/3,-1/1/4,1/1/5,-1/-1/6,1/-1/7,-1/-3/8,1/-3/9,-1/-5/10,1/-5/11}
{\node[vert] (c\z) at (\x,\y){};}
\foreach \x/\y in {-1/-1.5,-1/-2,-1/-2.5,1/-1.5,1/-2,1/-2.5}{\node[extra] () at (\x,\y){};}
\foreach \x/\y in {c0/c2,c2/c4,c4/c6,c8/c10,c1/c3,c3/c5,c5/c7,c9/c11}
{\draw [edge]  (\x) -- (\y);}

\foreach \x/\y/\z in {c0/c1/4.5,c0/c1/5.5,c2/c3/2.5,c2/c3/3.5,c4/c5/.5,c4/c5/1.5,c6/c7/-.5,c6/c7/-1.5,c8/c9/-2.5,c8/c9/-3.5,c10/c11/-4.5,c10/c11/-5.5}{
\draw[edge]  plot [smooth,tension=1.5] coordinates{(\x)(0,\z)(\y)};}

\node[ver] () at (-1.7,5.25){\small{$ v_{2d-1}^1$}};
\node[ver] () at (1.5,5.25){\small{$ v_{2d}^1$}};
\node[ver] () at (-1.7,3.25){\small{$ v_{2d-1}^2$}};
\node[ver] () at (1.5,3.25){\small{$ v_{2d}^2$}};
\node[ver] () at (-1.7,1.25){\small{$ v_{2d-1}^3$}};
\node[ver] () at (1.5,1.25){\small{$ v_{2d}^3$}};
\node[ver] () at (-1.7,-0.75){\small{$ v_{2d-1}^4$}};
\node[ver] () at (1.5,-0.75){\small{$ v_{2d}^4$}};
\node[ver] () at (-1.7,-2.75){\small{$ v_{2d-1}^n$}};
\node[ver] () at (1.6,-2.75){\small{$ v_{2d}^n$}};
\node[ver] () at (-1.7,-5.25){\small{$ v_{2d-1}^{n+1}$}};
\node[ver] () at (1.7,-5.25){\small{$ v_{2d}^{n+1}$}};

\node[ver] () at (0,5.1){\tiny{$0,1,\dots,$}};
\node[ver] () at (0,4.8){\tiny{$n-2$}};
\node[ver] () at (0,3.1){\tiny{$1,2,\dots,$}};
\node[ver] () at (0,2.8){\tiny{$n-2$}};
\node[ver] () at (0,1){\tiny{$2,\dots,n-1$}};
\node[ver] () at (0,-0.8){\tiny{$3,\dots,$}};
\node[ver] () at (0,-1.1){\tiny{$n-1,0$}};
\node[ver] () at (0,-2.8){\tiny{$n-1,$}};
\node[ver] () at (0,-3.1){\tiny{$0,\dots,n-4$}};
\node[ver] () at (0,-4.8){\tiny{$n-1,$}};
\node[ver] () at (0,-5.1){\tiny{$0,\dots,n-3$}};

\foreach \x/\y/\z in {-1.5/4/$n-1$,1.5/4/$n-1$,-1.3/2/0,1.3/2/0,-1.3/0/1,1.3/0/1,-1.5/-4/$n-2$,1.5/-4/$n-2$}{
\node[ver] () at (\x,\y){\tiny{\z}};}
\end{scope}

\foreach \x/\y in {8.5/3,9.5/3,10.5/3,8.5/1,9.5/1,10.5/1,8.5/-1,9.5/-1,10.5/-1,8.5/-3,9.5/-3,10.5/-3}
{\node[extra] () at (\x,\y){};}

\foreach \x/\y in 
{a3/b2,a5/b4,a7/b6,a9/b8}
{\draw [edge]  (\x) -- (\y);}

\foreach \x/\y in
{3/3.2,3/1.2,3/-0.8,3/-2.8}
{\node[ver] () at (\x,\y){\tiny{$n$}};}

\foreach \x/\y/\z in 
{a2/c3/2.2,a4/c5/0.2,a6/c7/-1.8,a8/c9/-3.8}
{\draw[edge] plot [smooth,tension=0.6] coordinates{(\x)(-1,\z) (7,\z) (14.5,\z)(\y)};}

\foreach \x in
{2.4,0.4,-1.6,-3.6}
{\node[ver] () at (10,\x){\tiny{$n$}};}

\end{tikzpicture}

\caption{A gem of $\mathbb{S}^{n-1}\times I$ with $2d(n+1)$ vertices.}\label{fig:I}
\end{figure}
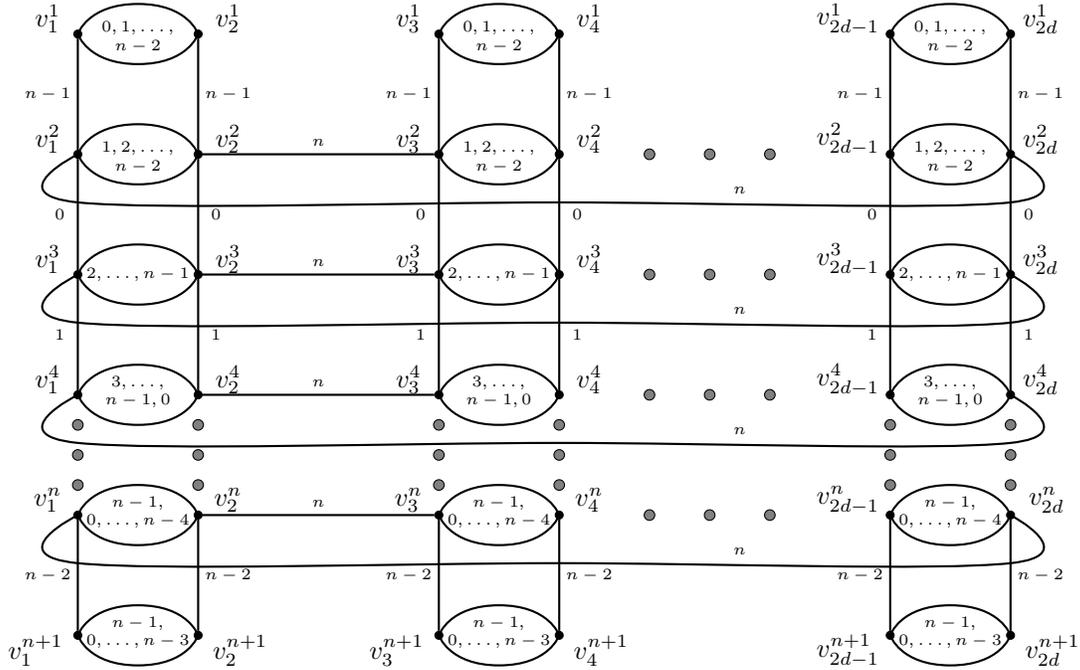

\begin{corollary}\label{corollary:main}
For each $d \in \mathbb{Z}$, there exists a minimal simplicial degree $d$ self-map of $\mathbb{S}^{n}$ with respect to the standard $2$-facet colored triangulation of $\mathbb{S}^{n}$, where $n\geq 1$. 
\end{corollary}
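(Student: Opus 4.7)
The plan is to obtain the corollary as an immediate specialization of Theorem \ref{theorem:main1} by taking the target manifold to be $M=\mathbb{S}^{n}$. Concretely, the standard crystallization $(\Gamma',\gamma')$ of $\mathbb{S}^{n}$ has $2p=2$ vertices, so in the notation of the theorem we have $p=1$. The role of the source crystallization $(\Gamma,\gamma)$ of $M$ will be played by $(\Gamma',\gamma')$ itself, so that the resulting simplicial map is a self-map of $\mathbb{S}^{n}$.

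For $d=0$ (the case $|d|<p=1$), I would simply take the constant map $g_{0}\colon V(\Gamma')\to V(\Gamma')$; the induced simplicial map has degree $0$ and is clearly minimal, since one cannot reduce below the two-vertex crystallization of $\mathbb{S}^{n}$. For $|d|\geq 1$, I would apply the procedure of Case 2 of Theorem \ref{theorem:main1}: starting from $(\Gamma',\gamma')$, add $d-1$ copies of $n$-dipoles to obtain a gem $\Gamma_{d}$ of $\mathbb{S}^{n}$ with exactly $2d$ vertices ($d$ positive and $d$ negative), which is precisely the graph depicted in the top half of Figure \ref{fig:Sn}. Then define $g_{d}\colon V(\Gamma_{d})\to V(\Gamma')$ by sending every positive vertex to $v^{1}$ and every negative vertex to $v^{2}$. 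Since $\Gamma'$ has only two vertices, adjacent vertices of $\Gamma_{d}$ are automatically sent either to the same vertex of $\Gamma'$ or to an edge of the correct color, so $g_{d}$ induces a well-defined simplicial self-map of $\mathbb{S}^{n}$ of degree $d$. Composing with the color-preserving swap $g'\colon v^{1}\leftrightarrow v^{2}$ yields the degree $-d$ version, exactly as in the proof of Theorem \ref{theorem:main1}.

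Minimality follows from the same argument used in the theorem: the standard crystallization of $\mathbb{S}^{n}$ has the minimum possible number of facets (two), and on the source side the preimage of any positive (resp.\ negative) facet of the target consists of precisely $d$ positive (resp.\ negative) facets of $\Gamma_{d}$, so one cannot decrease the vertex count of $\Gamma_{d}$ without lowering the degree. The only real point to check is that the $n$-dipole additions do not alter the PL type of the source, but this is a standard fact about dipole moves recalled in Section \ref{crystal}, so there is no serious obstacle; the corollary follows essentially verbatim from the proof of Theorem \ref{theorem:main1}.
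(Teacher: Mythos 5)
Your proposal is correct and follows essentially the same route as the paper: specialize Theorem \ref{theorem:main1} to $M=\mathbb{S}^{n}$, use the $2d$-vertex gem of Figure \ref{fig:Sn} (obtained by $d-1$ dipole additions) as the source, send the positive (odd-indexed) vertices to $v^{1}$ and the negative ones to $v^{2}$, and compose with the swap $v^{1}\leftrightarrow v^{2}$ for negative degrees. No substantive differences from the paper's argument.
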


\begin{proof}
If we take $M$ to be $\mathbb{S}^{n}$ in Theorem \ref{theorem:main1}, then we get the desired result. Figure \ref{fig:Sn} exhibits a gem of $\mathbb{S}^{n}$ corresponding to degree $d,\ d\geq 1$. If we map $v_{2i-1}$, for $1\leq i\leq d$, to $v^1$ and others to $v^2$, then we get a minimal simplicial degree $d$ self-map of $\mathbb{S}^{n}$.   \end{proof}

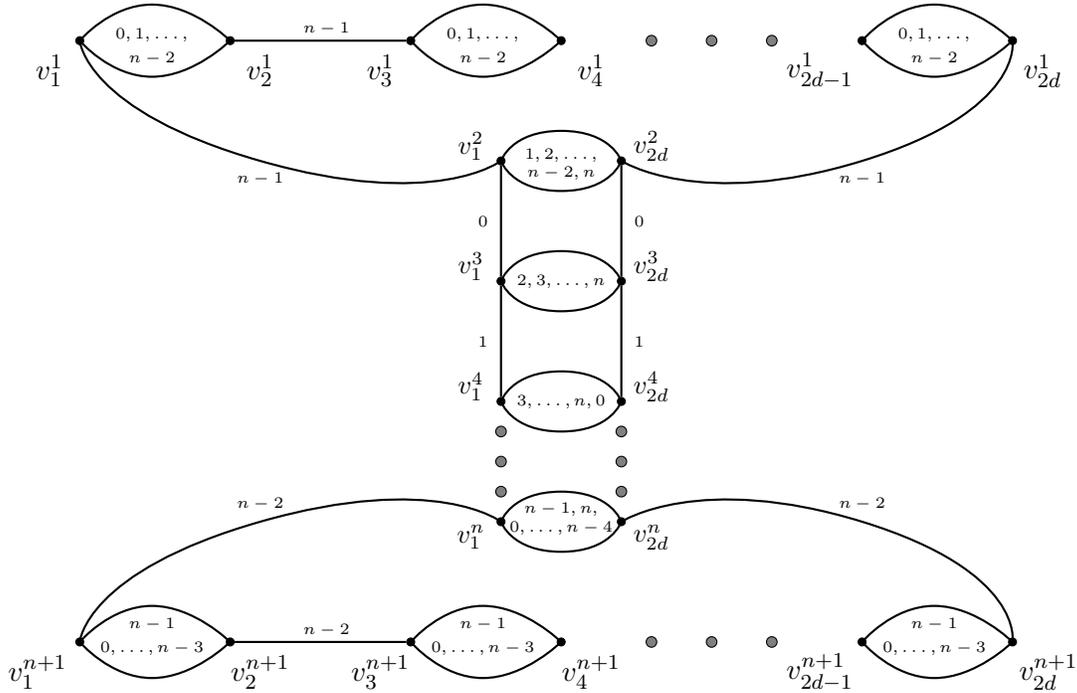
\begin{figure}[ht]
\tikzstyle{vert}=[circle, draw, fill=black!100, inner sep=0pt, minimum width=3pt]
\tikzstyle{ver}=[]
\tikzstyle{extra}=[circle, draw, fill=black!50, inner sep=0pt, minimum width=4pt]
\tikzstyle{edge} = [draw,thick,-]
\centering
\begin{tikzpicture}[scale=.8]

\begin{scope}[shift={(0,0)}]
\foreach \x/\y/\z in {-1/3/2,1/3/3,-1/1/4,1/1/5,-1/-1/6,1/-1/7,-1/-3/8,1/-3/9}
{\node[vert] (a\z) at (\x,\y){};}
\foreach \x/\y in {-1/-1.5,-1/-2,-1/-2.5,1/-1.5,1/-2,1/-2.5}{\node[extra] () at (\x,\y){};}
\foreach \x/\y in {a2/a4,a4/a6,a3/a5,a5/a7}
{\draw [edge]  (\x) -- (\y);}

\foreach \x/\y/\z in {a2/a3/2.5,a2/a3/3.5,a4/a5/.5,a4/a5/1.5,a6/a7/-.5,a6/a7/-1.5,a8/a9/-2.5,a8/a9/-3.5}{
\draw[edge]  plot [smooth,tension=1.5] coordinates{(\x)(0,\z)(\y)};}

\node[ver] () at (-1.5,3.25){\small{$ v_1^2$}};
\node[ver] () at (1.5,3.25){\small{$ v_{2d}^2$}};
\node[ver] () at (-1.5,1.25){\small{$ v_1^3$}};
\node[ver] () at (1.5,1.25){\small{$ v_{2d}^3$}};
\node[ver] () at (-1.5,-0.75){\small{$ v_1^4$}};
\node[ver] () at (1.5,-0.75){\small{$ v_{2d}^4$}};
\node[ver] () at (-1.5,-3.25){\small{$ v_1^n$}};
\node[ver] () at (1.5,-3.25){\small{$ v_{2d}^n$}};

\node[ver] () at (0,3.1){\tiny{$1,2,\dots,$}};
\node[ver] () at (0,2.8){\tiny{$n-2,n$}};
\node[ver] () at (0,1){\tiny{$2,3,\dots,n$}};
\node[ver] () at (0,-1){\tiny{$3,\dots,n,0$}};
\node[ver] () at (0,-2.8){\tiny{$n-1,n,$}};
\node[ver] () at (0,-3.1){\tiny{$0,\dots,n-4$}};

\foreach \x/\y/\z in {-1.3/2/0,1.3/2/0,-1.3/0/1,1.3/0/1}{
\node[ver] () at (\x,\y){\tiny{\z}};}
\end{scope}

\begin{scope}[shift={(0,5)}]
\foreach \x/\y/\z in
{-8/0/1,-5.5/0/2,-2.5/0/3,0/0/4,5/0/5,7.5/0/6}
{\node[vert] (b\z) at (\x,\y){};}

\foreach \x/\y in
{1.5/0,2.5/0,3.5/0}
{\node[extra] at (\x,\y){};}

\foreach \x/\y/\z in
{-8.5/-0.5/v_1^1,-5/-0.5/v_2^1,-3/-0.5/v_3^1,0.5/-0.5/v_4^1,4.3/-0.5/v_{2d-1}^1,8/-0.5/v_{2d}^1}
{\node[ver] () at (\x,\y){$\z$};}

\draw[edge] (b2)--(b3);

\foreach \x/\y/\z in 
{b1/b2/-6.8,b3/b4/-1.3,b5/b6/6.2}
{\draw[edge] plot [smooth,tension=1] coordinates{(\x)(\z,0.6)(\y)};}

\foreach \x/\y/\z in 
{b1/b2/-6.8,b3/b4/-1.3,b5/b6/6.2}
{\draw[edge] plot [smooth,tension=1] coordinates{(\x)(\z,-0.6)(\y)};}

\foreach \x in
{-6.8,-1.3,6.2}
{\node[ver] () at (\x,-0.3){\tiny{$n-2$}}; }

\foreach \x in
{-6.8,-1.3,6.2}
{\node[ver] () at (\x,0.1){\tiny{$0,1,\dots,$}}; }

\node[ver] () at (-3.9,0.2){\tiny{$n-1$}};

\end{scope}

\begin{scope}[shift={(0,-5)}]
\foreach \x/\y/\z in
{-8/0/1,-5.5/0/2,-2.5/0/3,0/0/4,5/0/5,7.5/0/6}
{\node[vert] (c\z) at (\x,\y){};}

\foreach \x/\y in
{1.5/0,2.5/0,3.5/0}
{\node[extra] at (\x,\y){};}

\foreach \x/\y/\z in
{-8.7/-0.5/v_1^{n+1},-5/-0.5/v_2^{n+1},-3/-0.5/v_3^{n+1},0.5/-0.5/v_4^{n+1},4.3/-0.5/v_{2d-1}^{n+1},8.1/-0.5/v_{2d}^{n+1}}
{\node[ver] () at (\x,\y){$\z$};}

\draw[edge] (c2)--(c3);

\foreach \x/\y/\z in 
{c1/c2/-6.8,c3/c4/-1.3,c5/c6/6.2}
{\draw[edge] plot [smooth,tension=1] coordinates{(\x)(\z,0.6)(\y)};}

\foreach \x/\y/\z in 
{c1/c2/-6.8,c3/c4/-1.3,c5/c6/6.2}
{\draw[edge] plot [smooth,tension=1] coordinates{(\x)(\z,-0.6)(\y)};}

\foreach \x in
{-6.8,-1.3,6.2}
{\node[ver] () at (\x,0.3){\tiny{$n-1,$}}; }

\foreach \x in
{-6.8,-1.3,6.2}
{\node[ver] () at (\x,-0.1){\tiny{$0,\dots,n-3$}}; }

\node[ver] () at (-3.9,0.2){\tiny{$n-2$}};

\end{scope}
\foreach \x/\y/\z/\w in
{b1/a2/-5/3,b6/a3/5/3,c1/a8/-5/-3,c6/a9/5/-3}
{\draw[edge] plot[smooth,tension=1.5] coordinates{(\x) (\z,\w)(\y)};}

\foreach \x/\y in 
{-5/2.7,5/2.7}
{\node[ver] () at (\x,\y){\tiny{$n-1$}};}

\foreach \x/\y in 
{-5/-2.7,5/-2.7}
{\node[ver] () at (\x,\y){\tiny{$n-2$}};}

\end{tikzpicture}
\caption{A gem of $\mathbb{S}^{n-1}\times I$.}\label{fig:Ip}
\end{figure}

\begin{figure}[ht]
\tikzstyle{vert}=[circle, draw, fill=black!100, inner sep=0pt, minimum width=3pt]
\tikzstyle{ver}=[]
\tikzstyle{extra}=[circle, draw, fill=black!50, inner sep=0pt, minimum width=4pt]
\tikzstyle{edge} = [draw,thick,-]
\centering
\begin{tikzpicture}[scale=.8]

\begin{scope}[shift={(0,4)}]
\foreach \x/\y/\z in
{-8/0/1,-5.5/0/2,-2.5/0/3,0/0/4,5/0/5,7.5/0/6}
{\node[vert] (\z) at (\x,\y){};}

\foreach \x/\y in
{1.5/0,2.5/0,3.5/0}
{\node[extra] at (\x,\y){};}

\foreach \x/\y/\z in
{-8.5/-0.5/v_1^1,-5/-0.5/v_2^1,-3/-0.5/v_3^1,0.5/-0.5/v_4^1,4.3/-0.5/v_{2d-1}^1,8/-0.5/v_{2d}^1}
{\node[ver] () at (\x,\y){$\z$};}

\draw[edge] (2)--(3);

\foreach \x/\y/\z in 
{1/2/-6.8,3/4/-1.3,5/6/6.2}
{\draw[edge] plot [smooth,tension=1] coordinates{(\x)(\z,0.6)(\y)};}

\foreach \x/\y/\z in 
{1/2/-6.8,3/4/-1.3,5/6/6.2}
{\draw[edge] plot [smooth,tension=1] coordinates{(\x)(\z,-0.6)(\y)};}

\foreach \x in
{-6.8,-1.3,6.2}
{\node[ver] () at (\x,-0.3){\tiny{$n-2$}}; }

\foreach \x in
{-6.8,-1.3,6.2}
{\node[ver] () at (\x,0.1){\tiny{$0,1,\dots,$}}; }

\draw[edge] plot[smooth,tension=0.5] coordinates{(1) (-6.5,-1) (6,-1) (6)};

\node[ver] () at (0,-1.3){\tiny{$n-1$}};
\node[ver] () at (-3.9,0.2){\tiny{$n-1$}};

\node[ver] () at (0,-2){$(a)$};
\end{scope}

\begin{scope}[shift={(0,0)}]
\foreach \x/\y/\z in
{-8/0/1,-5.5/0/2,-2.5/0/3,0/0/4,5/0/5,7.5/0/6}
{\node[vert] (\z) at (\x,\y){};}

\foreach \x/\y in
{1.5/0,2.5/0,3.5/0}
{\node[extra] at (\x,\y){};}

\foreach \x/\y/\z in
{-8.7/-0.5/v_1^{n+1},-5/-0.5/v_2^{n+1},-3/-0.5/v_3^{n+1},0.5/-0.5/v_4^{n+1},4.3/-0.5/v_{2d-1}^{n+1},8.1/-0.5/v_{2d}^{n+1}}
{\node[ver] () at (\x,\y){$\z$};}

\draw[edge] (2)--(3);

\foreach \x/\y/\z in 
{1/2/-6.8,3/4/-1.3,5/6/6.2}
{\draw[edge] plot [smooth,tension=1] coordinates{(\x)(\z,0.6)(\y)};}

\foreach \x/\y/\z in 
{1/2/-6.8,3/4/-1.3,5/6/6.2}
{\draw[edge] plot [smooth,tension=1] coordinates{(\x)(\z,-0.6)(\y)};}

\foreach \x in
{-6.8,-1.3,6.2}
{\node[ver] () at (\x,0.3){\tiny{$n-1,$}}; }

\foreach \x in
{-6.8,-1.3,6.2}
{\node[ver] () at (\x,-0.1){\tiny{$0,\dots,n-3$}}; }

\draw[edge] plot[smooth,tension=0.5] coordinates{(1) (-6.5,-1) (6,-1) (6)};

\node[ver] () at (0,-1.3){\tiny{$n-2$}};
\node[ver] () at (-3.9,0.2){\tiny{$n-2$}};

\node[ver] () at (0,-2){$(b)$};
\end{scope}

\end{tikzpicture}

\caption{Two boundary components of $\mathbb{S}^{n-1}\times I$.}\label{fig:Ibd}
\end{figure}
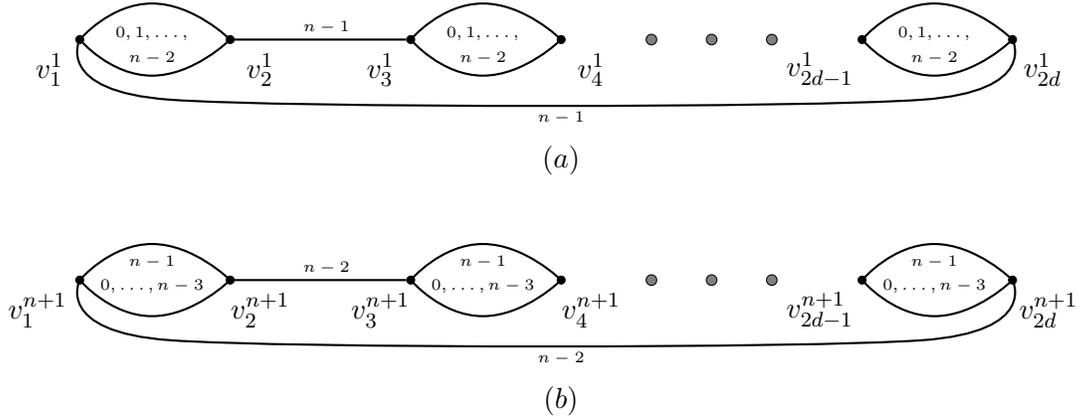

\noindent \textbf{\underline{Construction of a gem of $\mathbb{S}^{n-1}\times I$}:} Figure \ref{fig:crys} $(a)$ represents the standard crystallization of $\mathbb{S}^{n-1}\times I$, $n\ge 2$. Figure \ref{fig:I} exhibits an $(n+1)$-colored graph with boundary. Let $\Lambda_1^j=\{v_{2j}^2,v_{2j}^3,\dots, v_{2j}^n\}$ and $\Lambda_2^j=\{v_{2j+1}^2,v_{2j+1}^3,\dots, v_{2j+1}^n\}$ for all $1\le j \le d-1$. The map $\Phi_j:\Lambda_1^j\to \Lambda_2^j$, such that $\Phi(v_{2j}^k)=v_{2j+1}^k$ for all $2 \le k \le n$, induces an isomorphism between the subgraphs $A_1$ and $A_2$ generated by $\Lambda_1$ and $\Lambda_2$, respectively.
Then, applying polyhedral glue moves with respect to ($\Phi_j, \Lambda_1^j,  \Lambda_2^j, n$) for all $1\le j \le d-1$ successively, we get an $(n+1)$-colored graph with boundary as shown in Figure \ref{fig:Ip}. Now, canceling $1$-dipole with respect to color $n-1$ (resp. $n-2$) involving vertices $v_{2j}^1$ (resp. $v_{2j}^{n+1}$) and $v_{2j+1}^1$ (resp. $v_{2j+1}^{n+1}$) for $1 \le j \le d-1$ successively, we get the standard crystallization of $\mathbb{S}^{n-1}\times I$ (Figure \ref{fig:crys} (a)). Hence, Figure \ref{fig:I} exhibits a gem of $\mathbb{S}^{n-1}\times I$.

\begin{lemma} \label{Lemma}
    Let $\B_1$ and $\B_2$ be as shown in Figure $\ref{fig:Ibd}$ ($a$) and ($b$), respectively. Let $\B$ be an $(n+1)$-regular colored graph obtained from $\B_1$ and $\B_2$ in the following manner:
    \begin{itemize}
        \item  If $n$ is odd, then $v_1^1$ is connected to $v_1^{n+1}$, and for each $2 \le j \le 2d$, $v_j^1$ is connected to $v_{2d+2-j}^{n+1}$ by edges of color $n$.
        \item If $n$ is even, then $v_{2d}^1$ is connected to $v_1^{n+1}$, and for each $1 \le j \le 2d-1$, $v_j^1$ is connected to $v_{j+1}^{n+1}$ by edges of color $n$.
    \end{itemize}
    
 Then  $\B$  represents $\mathbb{S}^{n}$.
\end{lemma}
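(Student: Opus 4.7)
The plan is to prove $\B$ represents $\mathbb{S}^n$ by exhibiting a 1-dipole and iteratively cancelling to reduce $\B$ to a small gem already recognized as $\mathbb{S}^n$. First, I verify that $\B$ is $(n+1)$-regular and properly edge-colored by $\Delta_n$: each vertex of $V(\B_1) \cup V(\B_2)$ already has $n$ distinct colors from $\{0,1,\dots,n-1\}$ (as shown in Figure~\ref{fig:Ibd}), and the prescribed matching of color $n$ adds exactly one more edge per vertex.

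The key structural fact is that $\B_{\hat n} = \B_1 \sqcup \B_2$ has exactly two connected components, since $\B_1$ and $\B_2$ are each gems of $\mathbb{S}^{n-1}$ (arising as boundary components of the gem-with-boundary of $\mathbb{S}^{n-1}\times I$ from Figure~\ref{fig:I}), and the only edges joining $V(\B_1)$ to $V(\B_2)$ in $\B$ carry color $n$. Hence every color-$n$ edge of $\B$ is a 1-dipole. In the base case $d=1$, both $\B_1$ and $\B_2$ have only two vertices joined by edges of all non-$n$ colors; cancelling the 1-dipole $\{v_1^1,v_1^{n+1}\}$ (for $n$ odd; the case $n$ even is analogous) merges the remaining two vertices into a single pair joined by edges of every color in $\Delta_n$, which is the standard 2-vertex crystallization of $\mathbb{S}^n$.

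For $d \ge 2$ I proceed by induction: cancel the 1-dipole $\{v_1^1,v_1^{n+1}\}$ (resp.\ $\{v_{2d}^1,v_1^{n+1}\}$ for $n$ even) and continue reducing the resulting $(4d-2)$-vertex graph. The main obstacle is that after the first cancellation, new non-$n$-colored edges bridging the former $\B_1$ and $\B_2$ sides appear, and $\B_{\hat n}$ need no longer be disconnected, so the remaining color-$n$ edges may cease to be 1-dipoles. To overcome this I will invoke polyhedral glue moves on carefully chosen $n$-ball subgraphs, or, as a fall-back, compute the regular genus of the reduced gem for a suitably chosen cyclic permutation $\varepsilon$ of $\Delta_n$ and use the characterization ``regular genus $0$ $\iff$ $\mathbb{S}^n$'' recorded in Section~\ref{sec:genus}; a direct computation for small cases (for instance $n=3$, $d=2$) shows that the permutation $(0,1,3,2)$ gives $\chi_\varepsilon(\B)=2$, confirming $\rho_\varepsilon(\B)=0$ and strongly suggesting that an analogous permutation works in general.
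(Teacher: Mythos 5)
Your argument stalls precisely at the point you flag yourself, and neither of your two escape routes is actually carried out, so there is a genuine gap. The initial observation is fine: $\B_{\hat n}=\B_1\sqcup\B_2$ has two components, so each color-$n$ edge is a $1$-dipole, and for $d=1$ one cancellation produces the standard $2$-vertex crystallization of $\mathbb{S}^n$. But for $d\ge 2$, after the first cancellation the images of $\B_1$ and $\B_2$ are welded together by the transferred edges of colors $0,\dots,n-1$, the subgraph missing color $n$ becomes connected, and the remaining color-$n$ edges are no longer $1$-dipoles. Saying you will ``invoke polyhedral glue moves on carefully chosen $n$-ball subgraphs'' is not a proof: a polyhedral glue move requires exhibiting two vertex sets generating subgraphs that represent $n$-balls, lying in different components of $\G_{\hat n}$, together with an isomorphism realized by color-$n$ edges, and you have identified none of these in the reduced graph. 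The inductive step is therefore missing entirely.

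The fall-back is essentially the paper's actual method, but you only verify it for $(n,d)=(3,2)$. The paper instead counts, for general $n$ and $d$, all bicolored cycle numbers of $\B$ (namely $g_{\{i,n\}}=1$, $g_{\{n-2,n-1\}}=2$, $g_{\{n-1,n\}}=g_{\{n-2,n\}}=d$, $g_{\{i,n-2\}}=g_{\{i,n-1\}}=d+1$, $g_{\{i,j\}}=2d$ for $0\le i<j\le n-3$) and evaluates $\rho_\varepsilon$ for the explicit permutation $\varepsilon=(0,1,\dots,n-3,n-1,n,n-2)$, obtaining $0$. Note also that invoking ``regular genus $0$ $\iff$ $\mathbb{S}^n$'' from \cite{fg82} presupposes that $\B$ is a gem of a closed $n$-manifold; the paper secures this by an induction on the $(i+1)$-colored subgraphs, showing each embeds regularly in the $2$-sphere and hence represents $\mathbb{S}^i$, so that all vertex links are spheres. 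Your proposal omits this step as well. To repair your write-up, either complete the general bicolored-cycle count and the manifold check, or exhibit the ball subgraphs and isomorphisms needed for the glue moves explicitly.
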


\begin{proof}
  The $n$-regular colored graphs $\B_1$ and $\B_2$ represent $\mathbb{S}^{n-1}$. The numbers of different bi-colored cycles in $\B$ are as follows: $g_{\{i,n\}}=1,\ g_{\{n-1,n\}}=g_{\{n-2,n\}}=d,\ g_{\{i,n-1\}}=d+1,\ g_{\{n-2,n-1\}}=2,\ g_{\{i,n-2\}}=d+1,\ g_{\{i,j\}}=2d$ for all $0\le i < j \le n-3$. 
  
  Therefore, the regular genus of $\B$ with respect to the permutation $(\varepsilon_0,\dots,\varepsilon_n)=(0,1,\dots,\\n-3,n-1,n,n-2)$, is given by $$\rho(\B)=1-\frac{1}{2}\Bigg( (1-n)\frac{4d}{2}+\sum_{k\in \mathbb{Z}_{n+1}} g_{\{\varepsilon_k,\varepsilon_{k+1}\}}\Bigg).$$ 
  Simplifying further, $$\rho(\B)=1-\frac{1}{2}\Big( (1-n)2d+2d(n-3)+(d+1)+d+d+(d+1)\Big)=0.$$ 

For every $i \in \{2, \dots, n\}$, each component of the subgraph of $\mathcal B$, induced by any $i+1$ colors, embeds regularly on the $2$-sphere, indicating that its regular genus is zero. According to \cite{fg82}, if an $(i+1)$-regular colored graph is a gem and has the regular genus zero, it represents the $i$-sphere. Since it is known that every 3-regular colored graph is a gem, each component of the subgraph of $\B$, induced by any $3$ colors, represents the $2$-sphere. By induction, each $(i+1)$-regular colored graph is a gem and thus, represents the $i$-sphere for all $i \in \{2, \dots, n\}$. In particular, the $(n+1)$-regular colored graph $\mathcal{B}$ is a gem and represents the $n$-sphere.
\end{proof}

 \begin{figure}[h!]
\tikzstyle{ver}=[]
\tikzstyle{verti}=[circle, draw, fill=black!100, inner sep=0pt, minimum width=2.5pt]
\tikzstyle{edge} = [draw,thick,-]
    \centering
\begin{tikzpicture}[scale=0.55]

\begin{scope}[shift={(-12,0)}]
\foreach \x/\y/\z in
{2/-2/w2,2/0/z2,2/2.5/y2,2/4.5/x2,-1/-2/w1,-1/0/z1,-1/2.5/y1,-1/4.5/x1}
{\node[verti] (\z) at (\x,\y){};}

\foreach \x/\y/\z in
{2.5/-2.5/v_2^4,2.5/0.5/v_2^3,2.5/3/v_2^2,2.5/5/v_2^1,-1.5/-2.5/v_1^4,-1.5/0.5/v_1^3,-1.5/3/v_1^2,-1.5/5/v_1^1}
{\node[ver] (\z) at (\x,\y){$\z$};}

\draw[edge] plot [smooth,tension=1.5] coordinates{(w1) (0.5,-2.6) (w2)};

\draw [line width=2pt, line cap=round, dash pattern=on 0pt off 1.5\pgflinewidth] plot [smooth,tension=1.5] coordinates{(x1) (0.5,5.1) (x2)};

\draw [edge] plot [smooth,tension=1.5] coordinates{(x1) (0.5,5.1) (x2)};

\end{scope}

\begin{scope}[shift={(-5,0)}]
\foreach \x/\y/\z in
{2/-2/w4,2/0/z4,2/2.5/y4,2/4.5/x4,-1/-2/w3,-1/0/z3,-1/2.5/y3,-1/4.5/x3}
{\node[verti] (\z) at (\x,\y){};}

\foreach \x/\y/\z in
{2.5/-2.5/v_4^4,2.5/0.5/v_4^3,2.5/3/v_4^2,2.5/5/v_4^1,-1.5/-2.5/v_3^4,-1.5/0.5/v_3^3,-1.5/3/v_3^2,-1.5/5/v_3^1}
{\node[ver] (\z) at (\x,\y){$\z$};}

\draw[edge] plot [smooth,tension=1.5] coordinates{(w3) (0.5,-2.6) (w4)};

\draw [line width=2pt, line cap=round, dash pattern=on 0pt off 1.5\pgflinewidth] plot [smooth,tension=1.5] coordinates{(x3) (0.5,5.1) (x4)};

\draw [edge] plot [smooth,tension=1.5] coordinates{(x3) (0.5,5.1) (x4)};
\end{scope}

\begin{scope}[shift={(2,0)}]
\foreach \x/\y/\z in
{2/-2/w6,2/0/z6,2/2.5/y6,2/4.5/x6,-1/-2/w5,-1/0/z5,-1/2.5/y5,-1/4.5/x5}
{\node[verti] (\z) at (\x,\y){};}

\foreach \x/\y/\z in
{2.5/-2.5/v_6^4,2.5/0.5/v_6^3,2.5/3/v_6^2,2.5/5/v_6^1,-1.5/-2.5/v_5^4,-1.5/0.5/v_5^3,-1.5/3/v_5^2,-1.5/5/v_5^1}
{\node[ver] (\z) at (\x,\y){$\z$};}

\draw[edge] plot [smooth,tension=1.5] coordinates{(w5) (0.5,-2.6) (w6)};

\draw [line width=2pt, line cap=round, dash pattern=on 0pt off 1.5\pgflinewidth] plot [smooth,tension=1.5] coordinates{(x5) (0.5,5.1) (x6)};

\draw [edge] plot [smooth,tension=1.5] coordinates{(x5) (0.5,5.1) (x6)};
\end{scope}

\begin{scope}[shift={(9,0)}]
\foreach \x/\y/\z in
{2/-2/w8,2/0/z8,2/2.5/y8,2/4.5/x8,-1/-2/w7,-1/0/z7,-1/2.5/y7,-1/4.5/x7}
{\node[verti] (\z) at (\x,\y){};}

\foreach \x/\y/\z in
{2.5/-2.5/v_8^4,2.5/0.5/v_8^3,2.5/3/v_8^2,2.5/5/v_8^1,-1.5/-2.5/v_7^4,-1.5/0.5/v_7^3,-1.5/3/v_7^2,-1.5/5/v_7^1}
{\node[ver] (\z) at (\x,\y){$\z$};}

\draw[edge] plot [smooth,tension=1.5] coordinates{(w7) (0.5,-2.6) (w8)};

\draw [line width=2pt, line cap=round, dash pattern=on 0pt off 1.5\pgflinewidth] plot [smooth,tension=1.5] coordinates{(x7) (0.5,5.1) (x8)};

\draw [edge] plot [smooth,tension=1.5] coordinates{(x7) (0.5,5.1) (x8)};
\end{scope}

\foreach \x/\y in
{w1/w2,w3/w4,w5/w6,w7/w8,y1/z1,y2/z2,y3/z3,y4/z4,y5/z5,y6/z6,y7/z7,y8/z8,x1/x2,x3/x4,x5/x6,x7/x8}
{\path[edge,dashed] (\x) -- (\y);}

\foreach \x/\y in
{y1/x1,y2/x2,y3/x3,y4/x4,y5/x5,y6/x6,y7/x7,y8/x8,z1/w1,z2/w2,z3/w3,z4/w4,z5/w5,z6/w6,z7/w7,z8/w8,y1/y2,y3/y4,y5/y6,y7/y8,z1/z2,z3/z4,z5/z6,z7/z8}
{\path[edge] (\x) -- (\y);}

\foreach \x/\y in {z1/w1,z2/w2,z3/w3,z4/w4,z5/w5,z6/w6,z7/w7,z8/w8,y1/y2,y3/y4,y5/y6,y7/y8}
{{\draw [line width=2pt, line cap=round, dash pattern=on 0pt off 1.5\pgflinewidth]  (\x) -- (\y);}}

\foreach \x/\y in 
{y2/y3,y4/y5,y6/y7,z2/z3,z4/z5,z6/z7}
{\path[edge,dotted] (\x) -- (\y);}

\draw[edge, dotted] plot [smooth,tension=0.5] coordinates{(y1) (-10,1.7) (7.5,1.7) (y8)};

\draw[edge, dotted] plot [smooth,tension=0.5] coordinates{(z1) (-10,-0.7) (7.5,-0.7) (z8)};

\draw[edge, dotted] plot [smooth,tension=1.5] coordinates{(x1) (-14.5,1) (w1)};

\draw[edge, dotted] plot [smooth,tension=1.5] coordinates{(x5) (-0.5,1) (w5)};

\draw[edge, dotted] plot [smooth,tension=0.7] coordinates{(x2) (0,6.5) (12,5.5) (w8)};

\draw[edge, dotted] plot [smooth,tension=0.7] coordinates{(x3) (-1,-3)  (w7)};

\draw[edge, dotted] plot [smooth,tension=1] coordinates{(x4)  (w6)};

\draw[edge, dotted] plot [smooth,tension=0.7] coordinates{(x6) (0.5, 1)  (w4)};

\draw[edge, dotted] plot [smooth,tension=1.3] coordinates{(x7) (-1,-2)  (w3)};

\draw[edge, dotted] plot [smooth,tension=0.7] coordinates{(x8)  (12,-3) (-4,-3.5) (w2)};

\begin{scope} [scale=1, shift = {(-8, -4.7)}]
\foreach \x/\y/\z in {1/-0.5/0,5/-0.5/1,9/-0.5/2,13/-0.5/3}
{\node[ver] () at (\x,\y){$\z$};}
\path[edge,dashed] (0,0) -- (2,0);
\path[edge] (4,0) -- (6,0);

\path[edge] (8,0) -- (10,0);
\draw [line width=2pt, line cap=round, dash pattern=on 0pt off 1.3\pgflinewidth]  (4,0) -- (6,0);

\path[edge,dotted] (12,0) -- (14,0);
\end{scope}

\end{tikzpicture}

\caption{A gem of $\mathbb{S}^{2}\times \mathbb{S}^{1}$ corresponding to degree $4$.}\label{fig:ex1}
\end{figure}

 \begin{figure}[h!]
\tikzstyle{ver}=[]
\tikzstyle{verti}=[circle, draw, fill=black!100, inner sep=0pt, minimum width=3pt]
\tikzstyle{edge} = [draw,thick,-]
    \centering
\begin{tikzpicture}[scale=0.55]

\begin{scope}[shift={(-12,0)}]
\foreach \x/\y/\z in
{2/-4/u2,2/-2/w2,2/0/z2,2/2.5/y2,2/4.5/x2,-1/-4/u1,-1/-2/w1,-1/0/z1,-1/2.5/y1,-1/4.5/x1}
{\node[verti] (\z) at (\x,\y){};}

\foreach \x/\y/\z in
{2.5/-4.5/v_2^5,2.5/-2.5/v_2^4,2.5/-0.5/v_2^3,2.5/3/v_2^2,2.5/5/v_2^1,-1.5/-4.5/v_1^5,-1.5/-2.5/v_1^4,-1.5/-0.5/v_1^3,-1.5/3/v_1^2,-1.5/5/v_1^1}
{\node[ver] (\z) at (\x,\y){$\z$};}

\draw[edge,dashed] plot [smooth,tension=1.5] coordinates{(u1) (0.5,-4.6) (u2)};

\draw [line width=2pt, line cap=round, dash pattern=on 0pt off 1.5\pgflinewidth] plot [smooth,tension=1.5] coordinates{(u1) (0.5,-3.4) (u2)};

\draw [edge] plot [smooth,tension=1.5] coordinates{(u1) (0.5,-3.4) (u2)};

\draw[edge, dashed] plot [smooth,tension=1.5] coordinates{(w1) (0.5,-2.6) (w2)};

\draw[edge] plot [smooth,tension=1.5] coordinates{(z1) (0.5,-0.6) (z2)};

\draw[edge] plot [smooth,tension=1.5] coordinates{(y1) (0.5,3.1) (y2)};

\draw[edge] plot [smooth,tension=1.5] coordinates{(x1) (0.5,4) (x2)};

\draw [line width=2pt, line cap=round, dash pattern=on 0pt off 1.5\pgflinewidth] plot [smooth,tension=1.5] coordinates{(x1) (0.5,5.1) (x2)};

\draw [edge] plot [smooth,tension=1.5] coordinates{(x1) (0.5,5.1) (x2)};

\end{scope}

\begin{scope}[shift={(-5,0)}]
\foreach \x/\y/\z in
{2/-4/u4,2/-2/w4,2/0/z4,2/2.5/y4,2/4.5/x4,-1/-4/u3,-1/-2/w3,-1/0/z3,-1/2.5/y3,-1/4.5/x3}
{\node[verti] (\z) at (\x,\y){};}

\foreach \x/\y/\z in
{2.5/-4.5/v_4^5,2.5/-2.5/v_4^4,2.5/-0.5/v_4^3,2.5/3/v_4^2,2.5/5/v_4^1,-1.5/-4.5/v_3^5,-1.5/-2.5/v_3^4,-1.5/-0.5/v_3^3,-1.5/3/v_3^2,-1.5/5/v_3^1}
{\node[ver] (\z) at (\x,\y){$\z$};}

\draw[edge,dashed] plot [smooth,tension=1.5] coordinates{(u3) (0.5,-4.6) (u4)};

\draw [line width=2pt, line cap=round, dash pattern=on 0pt off 1.5\pgflinewidth] plot [smooth,tension=1.5] coordinates{(u3) (0.5,-3.4) (u4)};

\draw [edge] plot [smooth,tension=1.5] coordinates{(u3) (0.5,-3.4) (u4)};

\draw[edge, dashed] plot [smooth,tension=1.5] coordinates{(w3) (0.5,-2.6) (w4)};

\draw[edge] plot [smooth,tension=1.5] coordinates{(z3) (0.5,-0.6) (z4)};

\draw[edge] plot [smooth,tension=1.5] coordinates{(y3) (0.5,3.1) (y4)};

\draw[edge] plot [smooth,tension=1.5] coordinates{(x3) (0.5,4) (x4)};

\draw [line width=2pt, line cap=round, dash pattern=on 0pt off 1.5\pgflinewidth] plot [smooth,tension=1.5] coordinates{(x3) (0.5,5.1) (x4)};

\draw [edge] plot [smooth,tension=1.5] coordinates{(x3) (0.5,5.1) (x4)};

\end{scope}

\begin{scope}[shift={(2,0)}]
\foreach \x/\y/\z in
{2/-4/u6,2/-2/w6,2/0/z6,2/2.5/y6,2/4.5/x6,-1/-4/u5,-1/-2/w5,-1/0/z5,-1/2.5/y5,-1/4.5/x5}
{\node[verti] (\z) at (\x,\y){};}

\foreach \x/\y/\z in
{2.5/-4.5/v_6^5,2.5/-2.5/v_6^4,2.5/-0.5/v_6^3,2.5/3/v_6^2,2.5/5/v_6^1,-1.5/-4.5/v_5^5,-1.5/-2.5/v_5^4,-1.5/-0.5/v_5^3,-1.5/3/v_5^2,-1.5/5/v_5^1}
{\node[ver] (\z) at (\x,\y){$\z$};}

\draw[edge,dashed] plot [smooth,tension=1.5] coordinates{(u5) (0.5,-4.6) (u6)};

\draw [line width=2pt, line cap=round, dash pattern=on 0pt off 1.5\pgflinewidth] plot [smooth,tension=1.5] coordinates{(u5) (0.5,-3.4) (u6)};

\draw [edge] plot [smooth,tension=1.5] coordinates{(u5) (0.5,-3.4) (u6)};

\draw[edge, dashed] plot [smooth,tension=1.5] coordinates{(w5) (0.5,-2.6) (w6)};

\draw[edge] plot [smooth,tension=1.5] coordinates{(z5) (0.5,-0.6) (z6)};

\draw[edge] plot [smooth,tension=1.5] coordinates{(y5) (0.5,3.1) (y6)};

\draw[edge] plot [smooth,tension=1.5] coordinates{(x5) (0.5,4) (x6)};

\draw [line width=2pt, line cap=round, dash pattern=on 0pt off 1.5\pgflinewidth] plot [smooth,tension=1.5] coordinates{(x5) (0.5,5.1) (x6)};

\draw [edge] plot [smooth,tension=1.5] coordinates{(x5) (0.5,5.1) (x6)};

\end{scope}

\begin{scope}[shift={(9,0)}]
\foreach \x/\y/\z in
{2/-4/u8,2/-2/w8,2/0/z8,2/2.5/y8,2/4.5/x8,-1/-4/u7,-1/-2/w7,-1/0/z7,-1/2.5/y7,-1/4.5/x7}
{\node[verti] (\z) at (\x,\y){};}

\foreach \x/\y/\z in
{2.5/-4.5/v_8^5,2.5/-2.5/v_8^4,2.5/-0.5/v_8^3,2.5/3/v_8^2,2.5/5/v_8^1,-1.5/-4.5/v_7^5,-1.5/-2.5/v_7^4,-1.5/-0.5/v_7^3,-1.5/3/v_7^2,-1.5/5/v_7^1}
{\node[ver] (\z) at (\x,\y){$\z$};}

\draw[edge, dashed] plot [smooth,tension=1.5] coordinates{(u7) (0.5,-4.6) (u8)};

\draw [line width=2pt, line cap=round, dash pattern=on 0pt off 1.5\pgflinewidth] plot [smooth,tension=1.5] coordinates{(u7) (0.5,-3.4) (u8)};

\draw [edge] plot [smooth,tension=1.5] coordinates{(u7) (0.5,-3.4) (u8)};

\draw[edge, dashed] plot [smooth,tension=1.5] coordinates{(w7) (0.5,-2.6) (w8)};

\draw[edge] plot [smooth,tension=1.5] coordinates{(z7) (0.5,-0.6) (z8)};

\draw[edge] plot [smooth,tension=1.5] coordinates{(y7) (0.5,3.1) (y8)};

\draw[edge] plot [smooth,tension=1.5] coordinates{(x7) (0.5,4) (x8)};

\draw [line width=2pt, line cap=round, dash pattern=on 0pt off 1.5\pgflinewidth] plot [smooth,tension=1.5] coordinates{(x7) (0.5,5.1) (x8)};

\draw [edge] plot [smooth,tension=1.5] coordinates{(x7) (0.5,5.1) (x8)};

\end{scope}

\foreach \x/\y in
{y1/z1,y2/z2,y3/z3,y4/z4,y5/z5,y6/z6,y7/z7,y8/z8,x1/x2,x3/x4,x5/x6,x7/x8}
{\path[edge,dashed] (\x) -- (\y);}

\foreach \x/\y in
{w1/u1,w2/u2,w3/u3,w4/u4,w5/u5,w6/u6,w7/u7,w8/u8,z1/w1,z2/w2,z3/w3,z4/w4,z5/w5,z6/w6,z7/w7,z8/w8,y1/y2,y3/y4,y5/y6,y7/y8}
{\path[edge] (\x) -- (\y);}

\foreach \x/\y in {z1/w1,z2/w2,z3/w3,z4/w4,z5/w5,z6/w6,z7/w7,z8/w8,y1/y2,y3/y4,y5/y6,y7/y8}
{{\draw [line width=2pt, line cap=round, dash pattern=on 0pt off 1.5\pgflinewidth]  (\x) -- (\y);}}

\foreach \x/\y in 
{y2/y3,y4/y5,y6/y7,z2/z3,z4/z5,z6/z7,w2/w3,w4/w5,w6/w7,x1/u2,x2/u3,x3/u4,x4/u5,x5/u6,x6/u7,x7/u8}
{\path[edge,dotted] (\x) -- (\y);}

\foreach \x/\y in 
{x1/y1,x2/y2,x3/y3,x4/y4,x5/y5,x6/y6,x7/y7,x8/y8,z1/z2,z3/z4,z5/z6,z7/z8,w1/w2,w3/w4,w5/w6,w7/w8,u1/u2,u3/u4,u5/u6,u7/u8}
{\draw[decorate,decoration={snake, amplitude=1pt, segment length=8pt}] (\x) -- (\y);}

\draw[edge, dotted] plot [smooth,tension=0.5] coordinates{(y1) (-10,1.7) (7.5,1.7) (y8)};

\draw[edge, dotted] plot [smooth,tension=0.5] coordinates{(z1) (-10,0.8) (7.5,0.8) (z8)};

\draw[edge, dotted] plot [smooth,tension=0.5] coordinates{(w1) (-10,-1.2) (7.5,-1.2) (w8)};

\draw[edge, dotted] plot [smooth,tension=0.9] coordinates{(x8)  (12,-5) (-7,-5.5) (u1)};

\begin{scope} [scale=1, shift = {(-9, -7)}]
\foreach \x/\y/\z in {1/-0.5/0,5/-0.5/1,9/-0.5/2,13/-0.5/3,17/-0.5/4}
{\node[ver] () at (\x,\y){$\z$};}
\path[edge,dashed] (0,0) -- (2,0);
\path[edge] (4,0) -- (6,0);
\draw [line width=2pt, line cap=round, dash pattern=on 0pt off 1.3\pgflinewidth]  (4,0) -- (6,0);
\path[edge] (8,0) -- (10,0);
\draw[decorate,decoration={snake, amplitude=1pt, segment length=8pt}] (12,0) -- (14,0);

\path[edge,dotted] (16,0) -- (18,0);
\end{scope}

\end{tikzpicture}

\caption{A gem of $\mathbb{S}^{3}\times \mathbb{S}^{1}$ corresponding to degree $4$.}\label{fig:ex2}
\end{figure}

\begin{theorem}\label{theorem:main2}
   For each $d \in \mathbb{Z}$, there exists a minimal simplicial degree $d$ self-map of $\mathbb{S}^{n-1}\times \mathbb{S}^1$ with respect to the standard triangulation of $\mathbb{S}^{n-1}\times \mathbb{S}^1$, where $n\geq 2$.  
\end{theorem}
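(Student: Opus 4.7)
The plan is to close the $(n+1)$-regular colored graph of $\mathbb{S}^{n-1}\times I$ from Figure \ref{fig:I} into a graph $\Gamma_d$ on $2d(n+1)$ vertices representing $\mathbb{S}^{n-1}\times \mathbb{S}^1$, and then to define the natural $d$-to-$1$ vertex map onto the standard crystallization $\Gamma'$ of Figure \ref{fig:crys}(b)/(c). Paralleling the distinction between cases (b) and (c), I would add color-$n$ edges between the two boundary rows $\{v_j^1\}_{j=1}^{2d}$ and $\{v_j^{n+1}\}_{j=1}^{2d}$ in a parallel fashion (joining $v_j^1$ to $v_j^{n+1}$ for $1\le j\le 2d$) when $n$ is odd, and in a crossed fashion (joining $v_{2j-1}^1$ to $v_{2j}^{n+1}$ and $v_{2j}^1$ to $v_{2j-1}^{n+1}$ for $1\le j\le d$) when $n$ is even. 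The parity split is forced by bipartiteness of $\Gamma_d$, which in turn is needed for orientability of the represented manifold. To confirm that $\Gamma_d$ really represents $\mathbb{S}^{n-1}\times \mathbb{S}^1$, I would reduce it to the standard crystallization of Figure \ref{fig:crys}(b)/(c) through a sequence of $1$-dipole cancellations of the same flavor as in the paragraph preceding Lemma \ref{Lemma}: cancel the color-$(n-1)$ dipoles at $(v_{2j}^1, v_{2j+1}^1)$ and the color-$(n-2)$ dipoles at $(v_{2j}^{n+1}, v_{2j+1}^{n+1})$ for $1\le j\le d-1$, then unfold the interior layers analogously. Since $1$-dipole cancellation preserves the PL-homeomorphism type \cite{fg82i}, this gives $|\mathcal K(\Gamma_d)|\cong \mathbb{S}^{n-1}\times \mathbb{S}^1$.

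Next, define $g: V(\Gamma_d)\to V(\Gamma')$ by $g(v_{2j-1}^k)=v_1^k$ and $g(v_{2j}^k)=v_2^k$ for $1\le j\le d$ and $1\le k\le n+1$. A color-by-color inspection shows that $g$ is color-preserving: the multi-edges inside each block of $\Gamma_d$ map onto the corresponding multi-edges of $\Gamma'$; the single color-$(n-1)$ and color-$(n-2)$ vertical edges map onto their counterparts in $\Gamma'$; the inter-block color-$n$ edges at $k\in\{2,\dots,n\}$ map onto the internal color-$n$ edges of $\Gamma'$ (which exist because $v_1^k, v_2^k$ share color $n$ for these $k$); and the parity-adjusted closure edges at $k=1$ and $k=n+1$ map onto the color-$n$ closure edges of Figure \ref{fig:crys}(b)/(c). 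Hence $g$ induces a simplicial map $f_d:\mathcal K(\Gamma_d)\to\mathcal K(\Gamma')$. The bipartition of $\Gamma_d$ (determined by alternating signs along any edge) assigns $\mathrm{sign}(v_{2j-1}^k)=\mathrm{sign}(v_1^k)$ and $\mathrm{sign}(v_{2j}^k)=\mathrm{sign}(v_2^k)$ for every $j$, so each target vertex has exactly $d$ same-signed pre-images, yielding $d_{f_d}=d$ by the degree formula at the end of Section 2.3.

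Minimality follows because $\Gamma'$ is the unique minimal crystallization of $\mathbb{S}^{n-1}\times \mathbb{S}^1$ \cite{b19, gv87}, and any degree-$d$ simplicial self-map must have at least $|d|$ pre-images at each of the $2(n+1)$ target vertices, forcing the domain to carry at least $2(n+1)|d|$ vertices -- the bound attained by $\Gamma_d$. For $d<0$, compose $g$ with the color-preserving involution $v_1^k\leftrightarrow v_2^k$ of $\Gamma'$, which is a degree $-1$ self-map analogous to the $v^1\leftrightarrow v^2$ swap in Theorem \ref{theorem:main1}; the case $d=0$ is handled by a constant map. The chief obstacle I anticipate lies in the $1$-dipole reduction step, because the parallel and crossed closures require separate bookkeeping: one must check in each parity that the sequence of cancellations collapses the $d$ blocks cleanly back to Figure \ref{fig:crys}(b) or (c) without disturbing the remaining bi-colored cycle structure.
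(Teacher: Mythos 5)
Your overall architecture is the same as the paper's: close the gem of $\mathbb{S}^{n-1}\times I$ from Figure \ref{fig:I} by color-$n$ edges between the two boundary rows, map it $d$-to-$1$ onto the minimal crystallization, obtain degree $-d$ by composing with the sign swap, and get minimality from the minimality of the target. However, your closure rule is wrong, and the closure is the crux of the construction. The two boundary components of Figure \ref{fig:Ibd} are not aligned: in $\mathcal{B}_1$ the pairs $\{v_{2j-1}^1,v_{2j}^1\}$ are joined by the colors $0,\dots,n-2$ and chained cyclically by color $n-1$, while in $\mathcal{B}_2$ the pairs $\{v_{2j-1}^{n+1},v_{2j}^{n+1}\}$ are joined by $0,\dots,n-3,n-1$ and chained by color $n-2$. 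With your parallel closure $v_j^1\leftrightarrow v_j^{n+1}$ (for $n$ odd), the $\{n-1,n\}$- and $\{n-2,n\}$-bicolored cycles through the boundary rows become single cycles of length $4d$ instead of $d$ cycles of length $4$, and the identification of the two boundary spheres is not a homeomorphism. Concretely, for $n=3$ and $d=2$ your $\Gamma_d$ has a connected $\widehat{0}$-residue (colors $1,2,3$) on all $16$ vertices with $g_{\{1,2\}}=4$, $g_{\{1,3\}}=2$, $g_{\{2,3\}}=2$, hence Euler characteristic $4+2+2-8=0$: the link of the color-$0$ vertex is a torus, so $\mathcal{K}(\Gamma_d)$ is not a manifold, let alone $\mathbb{S}^{2}\times\mathbb{S}^1$. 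An analogous mismatch occurs for your even-$n$ closure.

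This is exactly why the paper uses the reflection $j\mapsto 2d+2-j$ for $n$ odd and the shift $j\mapsto j+1$ for $n$ even, and why it needs Lemma \ref{Lemma}: the regular-genus computation there certifies that the boundary graph $\mathcal{B}$ together with the chosen color-$n$ edges represents $\mathbb{S}^n$, i.e., that the two boundary $(n-1)$-spheres are identified along a homeomorphism, so the quotient is an $\mathbb{S}^{n-1}$-bundle over $\mathbb{S}^1$ (trivial by bipartiteness). Your substitute verification --- reducing $\Gamma_d$ to Figure \ref{fig:crys}(b)/(c) by $1$-dipole cancellations --- cannot be carried out: dipole moves preserve the represented polyhedron, and your $\Gamma_d$ does not represent a manifold for $d\geq 2$. (A warning sign you could have caught: the pairs $(v_{2j}^1,v_{2j+1}^1)$ you propose to cancel are not even adjacent in $\Gamma_d$ after the boundary of Figure \ref{fig:I} has been closed up, so the reduction of the paragraph preceding Lemma \ref{Lemma} does not transfer.) The remaining ingredients of your argument --- the definition of $g$, the degree count via same-signed preimages, minimality, and the treatment of $d\leq 0$ --- are correct and coincide with the paper's once the closure is repaired.
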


\begin{proof}
From Subsection \ref{crystal}, it follows that Figure \ref{fig:Ibd} exhibits two boundary components of $\mathbb{S}^{n-1}\times I$ (Figure \ref{fig:I}). The boundary components $\B_1$ and $\B_2$ (Figure~\ref{fig:Ibd} (a) and (b), respectively) both represent $\mathbb{S}^{n-1}$ as $n$-regular colored graphs. Equivalently, both $\B_1$ and $\B_2$ represent the $n$-ball $\mathbb{B}^{n}$ as $(n+1)$-colored graphs with connected boundaries. Let $\B$ be an $(n+1)$-regular colored graph obtained from $\B_1$ and $\B_2$ in the following manner:
    \begin{itemize}
        \item  If $n$ is odd, then $v_1^1$ is connected to $v_1^{n+1}$, and for each $2 \le j \le 2d$, $v_j^1$ is connected to $v_{2d+2-j}^{n+1}$  by edges of color $n$.
        \item If $n$ is even, then $v_{2d}^1$ is connected to $v_1^{n+1}$, and for each $1 \le j \le 2d-1$, $v_j^1$ is connected to $v_{j+1}^{n+1}$ by edges of color $n$.
    \end{itemize}

\noindent Then, by Lemma \ref{Lemma},  $(n+1)$-regular colored graph $\B$ represents the $n$-sphere $\mathbb{S}^{n}$. Consequently, the edges of color $n$ establish identifications between the boundaries of two $n$-balls, represented by $\B_1$ and $\B_2$, respectively. By maintaining these edges of color $n$ between the boundary components of the gem of $\mathbb{S}^{n-1}\times I$ (Figure \ref{fig:I}), the boundaries of $\mathbb{S}^{n-1}\times I$ are identified, resulting in an $\mathbb{S}^{n-1}$-bundle over $\mathbb{S}^{1}$. Since the gem is bipartitite, it represents $\mathbb{S}^{n-1}\times \mathbb{S}^{1}$ (cf. \cite{gv87, S51}). Figures \ref{fig:ex1} and \ref{fig:ex2}  illustrate gems of $\mathbb{S}^{2}\times \mathbb{S}^{1}$ and $\mathbb{S}^{3}\times \mathbb{S}^{1}$, respectively,  corresponding to degree $4$. 

Now, let $\G_1$ be the gem of $\mathbb{S}^{n-1}\times \mathbb{S}^{1}$ obtained as described above, and let $\G_2$ be the standard crystallization of $\mathbb{S}^{n-1}\times \mathbb{S}^{1}$ (Figure \ref{fig:crys} ($b$) if $n$ is odd and Figure \ref{fig:crys} ($c$) if $n$ is even). Let $v_{2j-1}^{k}$ (resp. $v_{2j}^{k}$), $1 \le j \le d$ and $1 \le k \le n+1$, be positive (resp. negative) vertices in Figure \ref{fig:I} and \ref{fig:crys} ($b,c$). Define $g_0:\G_2 \to \G_2$ such that $g_0$ is a constant function. Clearly, $g_0$ induces a simplicial self-map of $\mathbb{S}^{n-1}\times \mathbb{S}^1$ of degree $0$, and this induced map is minimal since $\G_2$ is the standard crystallization.

Now, let $d\ge 1$.  Define $g_d:\G_1\to \G_2$ such that $g_d(v_{2j-1}^{k})=v_1^k$ and $g_d(v_{2j}^{k})=v_2^k$ for all $1 \le j \le d$ and $1 \le k \le n+1$. This map $g_d$ satisfies the property that $g_d(u)$ and $g_d(v)$ are either the same for any two vertices $u$ and $v$ in $\G_1$ or are joined by an edge of color $i$ in $\G_2$ whenever $u$ and $v$ are joined by an edge of color $i$ in $\G_1$. Therefore, the map $g_d$ induces a simplicial self-map $f$ of $\mathbb{S}^{n-1}\times \mathbb{S}^{1}$ of degree $d$ (cf. Lemma \ref{gmap}). These induced simplicial self-maps are minimal, since in the inverse image of any positive (resp. negative) vertex, there are exactly $d$ positive (resp. negative) vertices, and $\G_2$ is the standard crystallization. Defining $g':\G_2\to \G_2$ as $g'(v_1^k)=v_2^k$ for all $1\le k \le n+1$ and letting $g_{-d}=g' \circ g_d$, we get that $g_{-d}$ induces a simplicial self-map of $\mathbb{S}^{n-1}\times \mathbb{S}^1$ of degree $-d$, and since $g_d$ is minimal, it is minimal.
\end{proof}

Since a gem with $2p$ vertices of a closed connected $n$-manifold $M$ corresponds to a $2p$-facet colored triangulation of the $n$-manifold $M$, the constructions in Corollary \ref{corollary:main} and Theorems \ref{theorem:main1}, \ref{theorem:main2} yield the following result.

\begin{corollary}\label{corollary:main2}
\begin{enumerate}
\item[$(a)$] For each $ d \in \mathbb{Z} $, there exists a degree $ d $ simplicial map from a \\ $(2\max\{|d|,1\})$-facet colored triangulation of $ \mathbb{S}^n $ to the $ 2 $-facet colored triangulation of $ \mathbb{S}^n $. This configuration represents the minimal possible number of facets for a degree $ d $ simplicial self-map of $ \mathbb{S}^n $, where $ n \geq 1 $.

\item[$(b)$] For each $ d \in \mathbb{Z} $, there exists a degree $ d $ simplicial map from a $ (2(n+1)\max\{|d|,1\} )$-facet  colored triangulation of $ \mathbb{S}^{n-1} \times \mathbb{S}^1 $ to the standard $ 2(n+1) $-facet colored triangulation of $ \mathbb{S}^{n-1} \times \mathbb{S}^1 $. This configuration represents the minimal possible number of facets for a degree $ d $ simplicial self-map of $ \mathbb{S}^{n-1} \times \mathbb{S}^1 $ with respect to the standard triangulation of $ \mathbb{S}^{n-1} \times \mathbb{S}^1 $, where $ n \geq 2 $.
  \end{enumerate}
\end{corollary}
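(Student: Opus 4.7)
The plan is to deduce both statements by translating the graph-theoretic constructions of Corollary~\ref{corollary:main} and Theorem~\ref{theorem:main2} into the language of colored triangulations via the correspondence $\Gamma \leftrightarrow \mathcal{K}(\Gamma)$, under which a gem with $2p$ vertices becomes a colored triangulation with exactly $2p$ facets. For part~$(a)$, I would take the vertex map $g_d$ from Corollary~\ref{corollary:main} (and Figure~\ref{fig:Sn}) between the $2\max\{|d|,1\}$-vertex gem of $\mathbb{S}^n$ and the two-vertex standard crystallization, and pass to the associated simplicial map $f: \mathcal{K}(\Gamma)\to \mathcal{K}(\Gamma')$; part~$(b)$ is identical, using the vertex map from Theorem~\ref{theorem:main2} between the $2(n+1)\max\{|d|,1\}$-vertex gem of $\mathbb{S}^{n-1}\times\mathbb{S}^1$ and the $2(n+1)$-vertex standard crystallization. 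This handles the existence claim immediately.

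For minimality I would split the argument into two parts. The minimality of the \emph{target} reduces to the known fact that the standard two-vertex crystallization of $\mathbb{S}^n$ and the standard $2(n+1)$-vertex crystallization of $\mathbb{S}^{n-1}\times\mathbb{S}^1$ are the unique minimal colored triangulations of these manifolds (cf.\ \cite{fg82, b19, gv87}). The minimality of the \emph{source} in the case $|d|\geq 1$ follows directly from the degree formula recalled in Section~2: any degree $d$ simplicial map $f:\mathcal{K}_1'\to\mathcal{K}_2'$ between $(n+1)$-colored triangulations of orientable $n$-manifolds with $d\neq 0$ must be surjective, and for every facet $\sigma\in\mathcal{K}_2'$ the algebraic number of $f^{-1}(\sigma)$ equals $\pm d$; hence $|f^{-1}(\sigma)|\geq |d|$, so the number of facets of $\mathcal{K}_1'$ is at least $|d|$ times the number of facets of $\mathcal{K}_2'$. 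Combined with the lower bound on the target, this yields the sharp counts $2\max\{|d|,1\}$ in part~$(a)$ and $2(n+1)\max\{|d|,1\}$ in part~$(b)$.

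The main subtlety I anticipate is handling the case $d=0$ uniformly, since a degree zero map need not be surjective and the above preimage count becomes vacuous. In that case the source must nevertheless admit some colored triangulation of the manifold under consideration, and its minimum facet count is again $2$ for $\mathbb{S}^n$ and $2(n+1)$ for $\mathbb{S}^{n-1}\times\mathbb{S}^1$, which matches $2\max\{|0|,1\}$ and $2(n+1)\max\{|0|,1\}$ respectively. Existence in the $d=0$ case is witnessed by the constant vertex maps $g_0$ appearing in the proofs of Theorems~\ref{theorem:main1} and~\ref{theorem:main2}. Assembling the existence construction with the two minimality bounds, and noting that the realized configurations saturate both of them, yields the corollary.
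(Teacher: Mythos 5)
Your proposal is correct and follows essentially the same route as the paper: the paper's proof of this corollary is precisely the observation that a gem with $2p$ vertices corresponds to a $2p$-facet colored triangulation, so that the existence and minimality claims are inherited from Corollary \ref{corollary:main} and Theorems \ref{theorem:main1} and \ref{theorem:main2}. You merely make explicit the two lower bounds (uniqueness/minimality of the standard crystallizations for the target, and the preimage-counting bound $|f^{-1}(\sigma)|\geq |d|$ for the source) that the paper leaves implicit in the minimality assertions of those earlier proofs.
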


\noindent {\bf Acknowledgment:}  The authors thank the anonymous referees for many useful comments and suggestions that have greatly improved the article. The first author is supported by the Institute fellowship from the Indian Institute of Technology Delhi. The second author is supported by the Mathematical Research Impact Centric Support (MATRICS) Research Grant (MTR/2022/000036) by ANRF (India). The third author is supported by the National Research Foundation of Korea (NRF) grant funded by the Korean government (MSIT) (RS-2025-00555914). 
 
 \medskip

\noindent {\bf Data Availability:} No data was used for the research described in the article.

\medskip
 
\noindent {\bf Conflict of interest:} There is no competing interest.

\medskip

\footnotesize


\begin{thebibliography}{99}
\bibitem{b19}  B. Basak, Regular genus and gem-complexity of some mapping tori, {\em Rev. R. Acad. Cienc. Exactas Fís. Nat. Ser. A Mat. RACSAM} {\bf 113} (2019), no. 3, 2479--2493.

\bibitem{bb21} B. Basak and M. Binjola, Lower bounds for regular genus and gem-complexity of PL 4-manifolds with boundary, {\em Forum Math.} {\bf 33} (2021), no. 2, 289--304.

\bibitem{bc17}  B. Basak and M. R. Casali, Lower bounds for regular genus and gem-complexity of PL 4-manifolds,  {\em Forum Math.} {\bf 29} (2017), no 4, 761--773. 

\bibitem{B79} I. Berstein and A. L. Edmonds, On construction of branched coverings of low dimensional manifolds, {\em Trans. Amer. Math. Soc.} {\bf 247} (1979), 87--124.

\bibitem{bj84} A. Bj\"{o}rner, Posets, regular CW complexes and Bruhat order, {\em European J. Combin.} {\bf 5} (1984), 7--16.

\bibitem{bm08} J. A. Bondy and U. S. R. Murty, {\em Graph Theory}, Springer, New York, 2008.

\bibitem{Brower1911}  L. E. J. Brouwer,  \"{U}ber Abbildung von Mannigfaltigkeiten, {\em Math. Ann.} {\bf 71} (1911), no. 1, 97–115.

\bibitem{cc21} M.R. Casali and P. Cristofori, Compact 4-manifolds admitting special handle decompositions, {\em Rev. R. Acad. Cienc. Exactas Fís. Nat. Ser. A Mat. RACSAM} {\bf 115} (2021), no.3, Paper No. 118, 14 pp.

\bibitem{ccg18} M.R. Casali, P. Cristofori and L. Grasselli, G-degree for singular manifolds, {\em Rev. R. Acad. Cienc. Exactas Fís. Nat. Ser. A Mat. RACSAM} {\bf 112 (3)} (2018), 693--704.

\bibitem{cm16} P. Cristofori and M. Mulazzani, Compact 3-manifolds via 4-colored graphs, {\em Rev. R. Acad. Cienc. Exactas Fís. Nat. Ser. A Mat. RACSAM} {\bf 110 (2)} (2016),  395--416.

\bibitem{Epstein} D. B. A. Epstein,  The degree of a map, {\em Proc. London Math. Soc.(3)} {\bf 16} (1966), 369–383.
 
\bibitem{f67} K. Fan, Simplicial maps from an orientable $n$-pseudomanifold into $\mathbb{S}^{m}$ with the octahedral triangulation, {\em J. Combinatorial Theory} {\bf 2} (1967), 588--602.


\bibitem{fg82i}  M. Ferri and C. Gagliardi, Crystallisation moves, {\em Pacific J. Math.} {\bf 100} (1982), no. 1, 85--103.


\bibitem{fg82} M. Ferri and C. Gagliardi, The only genus zero $n$-manifold is $S^n$, {\em Proc. Amer. Math. Soc.} {\bf 85} (1982), 638--642.

\bibitem{fgg86} M. Ferri, C. Gagliardi, and L. Grasselli, A graph-theoretic representation of PL-manifolds -- A survey on crystallizations, {\em Acquationes Math.} {\bf 31} (1986), 121--141.

\bibitem{g81} C. Gagliardi, Extending the concept of genus to dimension $n$. {\em Proc. Amer. Math. Soc.} {\bf 81} (1981), 473--481.

\bibitem{ga81i} C. Gagliardi, Regular imbeddings of edge-coloured graphs, {\em Geom. Dedicata} {\bf  11} (1981), 397--414.


\bibitem{gv87} C. Gagliardi and G. Volzone,  Handles in graphs and sphere bundles over $\mathbb{S}^1$, {\em European J. Combin.} {\bf 8} (1987), no. 2, 151--158.

\bibitem{Gromov1982} M. Gromov, Volume and bounded cohomology, {\em Publications Math\'{e}matiques de l'IH\'{E}S} {\bf 56} (1982), 5--99.

 \bibitem{o53} P. Olum, Mappings of manifolds and the notion of degree. {\em Ann. Math.} {\bf 58} (1953), 458--480.

\bibitem{rs72} C. P. Rourke and B. J. Sanderson, {\it Introduction to piecewise-linear topology}, Springer Verlag, New York - Heidelberg (1972).

\bibitem{R24} A. Ryabichev, Maximal degree of a map of surfaces, {\em Pacific J. Math.} {\bf 328} (2024), no. 1, 145--156.


\bibitem{S51} N. Steenrod, {\it The Topology of Fibre Bundles}, Princeton University Press, Princeton, 1951.


\end{thebibliography}
\end{document}